\providecommand{\thisvolume}[1]{this volume of EPTCS, Open Publishing Association}
\newcommand*{\secref}[1]{\S\ref{#1}}
\appto{\bibsetup}{\sloppy}
\newcommand*{\citep}[1]{\parencite{#1}}
\newcommand*{\citet}[1]{\textcite{#1}}
\newcommand{\adjunction}{\@ifstar\named@adjunction\normal@adjunction}
\newcommand{\normal@adjunction}[4]{%
  #1\colon #2%
  \mathrel{\vcenter{%
    \offinterlineskip\m@th
    \ialign{%
      \hfil$##$\hfil\cr
      \longrightharpoonup\cr
      \noalign{\kern-.3ex}
      \smallbot\cr
      \longleftharpoondown\cr
    }%
  }}%
  #3 \noloc #4%
}
\newcommand{\named@adjunction}[4]{%
  #2%
  \mathrel{\vcenter{%
    \offinterlineskip\m@th
    \ialign{%
      \hfil$##$\hfil\cr
      \scriptstyle#1\cr
      \noalign{\kern.1ex}
      \longrightharpoonup\cr
      \noalign{\kern-.3ex}
      \smallbot\cr
      \longleftharpoondown\cr
      \scriptstyle#4\cr
    }%
  }}%
  #3%
}
\newcommand{\longrightharpoonup}{\relbar\joinrel\rightharpoonup}
\newcommand{\longleftharpoondown}{\leftharpoondown\joinrel\relbar}
\newcommand\noloc{%
  \nobreak
  \mspace{6mu plus 1mu}
  {:}
  \nonscript\mkern-\thinmuskip
  \mathpunct{}
  \mspace{2mu}
}
\newcommand{\smallbot}{%
  \begingroup\setlength\unitlength{.15em}%
  \begin{picture}(1,1)
  \roundcap
  \polyline(0,0)(1,0)
  \polyline(0.5,0)(0.5,1)
  \end{picture}%
  \endgroup
}
\let\op=\relax
\def\op{\ensuremath{^{\,\mathrm{op}}}}
\newcommand{\Kl}{\mathcal{K}\mspace{-2mu}\ell}
\newcommand{\Ba}{{\mathcal{B}}}
\newcommand{\Ca}{{\mathcal{C}}}
\newcommand{\Ea}{{\mathcal{E}}}
\newcommand{\Ka}{{\mathcal{K}}}
\newcommand{\Rb}{{\mathbb{R}}}
\newcommand{\xto}[2][]{\xrightarrow[#1]{#2}}
\newcommand{\mathoverlap}[2]{\mathpalette\mathoverlap@{{#1}{#2}}}
\newcommand{\mathoverlap@}[2]{\mathoverlap@@{#1}#2}
\newcommand{\mathoverlap@@}[3]{\ooalign{$\m@th#1#2$\crcr\hidewidth$\m@th#1#3$\hidewidth}}
\providecommand*{\xmapstofill@}{%
  \arrowfill@{\mapstochar\relbar}\relbar\rightarrow
}
\providecommand*{\xmapsto}[2][]{%
  \ext@arrow 0395\xmapstofill@{#1}{#2}%
}
\def\slashedarrowfill@#1#2#3#4#5{%
  $\m@th\thickmuskip0mu\medmuskip\thickmuskip\thinmuskip\thickmuskip
   \relax#5#1\mkern-7mu%
   \cleaders\hbox{$#5\mkern-2mu#2\mkern-2mu$}\hfill
   \mathclap{#3}\mathclap{#2}%
   \cleaders\hbox{$#5\mkern-2mu#2\mkern-2mu$}\hfill
   \mkern-7mu#4$%
}
\def\rightslashedarrowfill@{%
  \slashedarrowfill@\relbar\relbar\mapstochar\rightarrow}
\newcommand\xslashedrightarrow[2][]{%
  \ext@arrow 0055{\rightslashedarrowfill@}{#1}{#2}}
\theoremstyle{definition}
\newtheorem{defn}{Definition}[section]
\newtheorem{notation}[defn]{Notation}
\newtheorem{rmk}[defn]{Remark}
\newtheorem*{rmk*}{Remark}
\newtheorem{prop}[defn]{Proposition}
\newtheorem{prop*}{Proposition}
\newtheorem{thm}[defn]{Theorem}
\newtheorem{cor}[defn]{Corollary}
\newtheorem*{thm*}{Theorem}
\newtheorem*{cor*}{Corollary}
\theoremstyle{remark}
\definecolor{darkblue}{rgb}{0,0,0.7}
\tikzstyle{dot}=[inner sep=0.0mm, outer sep=0.0mm, minimum size=1mm, draw, shape=circle]
\tikzstyle{dot-5mm}=[dot, minimum size=5mm]
\tikzstyle{dot-1cm}=[dot, minimum size=1cm]
\tikzstyle{wcopy}=[dot, fill=white, scale=2.0]
\tikzstyle{bcopy}=[dot, fill=black, scale=2.0]
\tikzstyle{box}=[fill=white, draw=black, shape=rectangle]
\tikzstyle{box-5mm}=[box, minimum size=5mm, shape aspect=1]
\tikzstyle{box-7mm}=[box, minimum size=7mm, shape aspect=1]
\tikzstyle{box-1cm}=[box, minimum size=10mm, shape aspect=1]
\tikzstyle{dia}=[fill=white, draw=black, shape=diamond]
\tikzstyle{dia-5mm}=[dia, minimum size=5mm, shape aspect=1]
\tikzstyle{dia-7mm}=[dia, minimum size=7mm, shape aspect=1]
\tikzstyle{effect}=[regular polygon, regular polygon sides=3, draw]
\tikzstyle{state0}=[regular polygon, regular polygon sides=3, draw, shape border rotate=0]
\tikzstyle{state90}=[regular polygon, regular polygon sides=3, draw, shape border rotate=90]
\tikzstyle{state180}=[regular polygon, regular polygon sides=3, draw, shape border rotate=180]
\tikzstyle{state270}=[regular polygon, regular polygon sides=3, draw, shape border rotate=270]
\tikzstyle{scalar}=[diamond, draw, inner sep=1pt]
\tikzstyle{ground0}=[my ground, draw, inner sep=0pt, minimum width=4.2pt, minimum height=11.2pt, anchor=input, rotate=0]
\tikzstyle{ground90}=[my ground, draw, inner sep=0pt, minimum width=4.2pt, minimum height=11.2pt, anchor=input, rotate=90]
\newsavebox\sbground
\savebox\sbground{%
  \begin{tikzpicture}[baseline=0pt]
    \draw (0,-.1ex) to (0,.85ex)
    node[ground IEC,draw,anchor=input,inner sep=0pt,
    minimum width=3.15pt,minimum height=8.4pt,rotate=90] {};
  \end{tikzpicture}%
}
\newsavebox\sbcopy
\savebox\sbcopy{%
  \begin{tikzpicture}[baseline=0pt]
    \node[wcopy,scale=0.7] (a) at (0,3.8pt) {};
    \draw (a) -- +(-90:.21);
    \draw (a) -- +(45:.21);
    \draw (a) -- +(135:.21);
  \end{tikzpicture}}
\newsavebox\bsbcopy
\savebox\bsbcopy{%
  \begin{tikzpicture}[baseline=0pt]
    \node[bcopy,scale=0.7] (a) at (0,3.8pt) {};
    \draw (a) -- +(-90:.21);
    \draw (a) -- +(45:.21);
    \draw (a) -- +(135:.21);
  \end{tikzpicture}}
\def\Ca{\mathcal{C}}
\def\Cat{\mathbf{Cat}}
\def\CCospan{\mathbb{C}\mathbf{ospan}}
\def\Comon{\mathbf{Comon}}
\def\FinSet{\mathbf{FinSet}}
\def\Meas{\mathbf{Meas}}
\def\Mod{\mathbf{Mod}}
\def\QBS{\mathbf{QBS}}
\def\sfKrn{\mathbf{sfKrn}}
\def\SSpan{\mathbb{S}\mathbf{pan}}
\def\Set{\mathbf{Set}}
\def\unif{\upsilon}
\def\FG{\Bbb{FG}}
\def\Ibb{\mathbb{I}}
\def\Nbb{\mathbb{N}}
\def\Rbb{\mathbb{R}}
\def\Sbb{\Bbb{S}}
\def\id{\mathsf{id}}
\def\disc{\mathsf{disc}\,}
\newcommand{\ovln}[1]{\overline{#1}}
\def\kto{\rightsquigarrow}
\newcommand{\xkto}[1]{\overset{#1}{\kto}}
\def\xto{\xrightarrow}
\author{Toby St Clere Smithe
  \institute{VERSES Research Lab}
  \institute{Topos Institute}
  \email{act@tsmithe.net}
}
\date{12 June 2024}
\title{Copy-composition for Probabilistic Graphical Models\footnote{
  Some of this content appeared first on the author's website, such as at \url{https://tsmithe.net/p/factor-graphs.html}; however, none has appeared elsewhere.}}
\begin{document}

\maketitle

\begin{abstract}
  In probabilistic modelling, joint distributions are often of more interest than their marginals, but the standard composition of stochastic channels is defined by marginalization.
  Recently, the notion of `copy-composition' was introduced in order to circumvent this problem and express the chain rule of the relative entropy fibrationally, but while that goal was achieved, copy-composition lacked a satisfactory origin story.
  Here, we supply such a story for two standard probabilistic tools: directed and undirected graphical models.
  We explain that (directed) Bayesian networks may be understood as ``stochastic terms'' of product type, in which context copy-composition amounts to a pull-push operation.
  Likewise, we show that (undirected) factor graphs compose by copy-composition.
  In each case, our construction yields a double fibration of decorated (co)spans.
  Along the way, we introduce a useful bifibration of measure kernels, to provide semantics for the notion of stochastic term, which allows us to generalize probabilistic modelling from product to dependent types.
\end{abstract}

\section{Introduction}

Recent work has formalized the ``chain rule'' of the relative entropy (\textit{a.k.a.} Kullback-Leibler divergence) as a section of a fibration of statistical games over Bayesian lenses \parencite{Smithe2023Approximate}.
Curiously, however, the base category of this fibration was not a category of stochastic channels (measure kernels), as one might expect, but instead a bicategory of \textit{copy-composite} channels.
The need for this stemmed from the Chapman-Kolmogorov rule for the composition of kernels, which involves marginalization; and marginalization does not commute with the logarithm defining the relative entropy.
Copy-composition defers this marginalization (indeed, the Chapman-Kolmogorov rule is precisely the marginalization of the intermediate copy), so there is no problem of commutativity, and the chain rule thus obtains.

Although this yields an elegant fibrational structure, copy-composition is not strictly unital, owing to the extra copy produced when copy-composing with the identity.
This means that the base of the fibration is necessarily a bicategory.
Moreover, the construction of this bicategory \parencite[\S2]{Smithe2023Approximate} was largely \textit{ad hoc}, based on the similarity to a $\mathbf{CoPara}$ construction \parencite[Remark 4]{Capucci2022Foundations}.
This was despite the evident importance of copy-composition to the statistical problems of interest there: copy-composition constructs \textit{joint distributions}, which are often of more interest in applications than the composite channels that would yield their marginals.
This can be seen not only via the relative entropy, but also in the context of Bayesian networks, which describe the factorization of a joint distribution according to a graphical structure.
\textcite{Fong2013Causal} studied Bayesian networks from a categorical perspective, making extensive use of comonoid (`copy-discard' \parencite{Cho2017Disintegration}) structure to construct the corresponding joint distributions; see for example the proof of his Theorem 4.5.

In the present submission, we show how copy-composition emerges naturally (but distinctly) from the compositional structure of directed and undirected probabilistic graphical models, thereby replacing the earlier arbitrariness with something structurally meaningful.
In the directed case, the key observation is that a Bayesian network may be understood as a ``stochastic term'' of some displayed type (which is invariably a product), equipped with a distribution (the `prior') over its context.
Identifying types with bundles and terms with sections, this means understanding the factors of a Bayesian network as \textit{stochastic sections}; copy-composition then emerges via a pull-push operation.
In the undirected case, one observes that the `factors' of a factor graph are costates\footnote{
A \textit{state} in a monoidal category is a morphism out of the monoidal unit; a \textit{costate} is a morphism into the monoidal unit.}
in a copy-discard category, and that copy-composition amounts to a kind of gluing.
In both cases, copy-composition appears as the horizontal composition operation of a pseudo double category, thereby extending the earlier bicategorical story.
These double categories\footnote{
We take ``double category'' henceforth to mean \textit{pseudo double category}, oriented such that composition is strict in the vertical category and weak in the horizontal.}
are in fact double fibrations, following the generalized decorated (co)span construction of \textcite{Patterson2023Structured}: in the directed case, one decorates spans of display maps with sections; in the undirected case, one decorates cospans with factors.

\section{Directed models: pull-push stochastic sections}

A \textit{Bayesian network} is a distribution (or measure) on a product space that factorizes into a product of conditional distributions (or measure kernels) that matches the structure of a directed acyclic graph \parencite[Definition 3.7]{Fong2013Causal}.
Interpreting this idea in a category of stochastic channels (such as measure kernels between measurable spaces), \textcite[Theorem 4.5]{Fong2013Causal} showed that any Bayesian network can be written as a state $1\to \otimes_j\,X_j$ that factorizes as the sequential composition of morphisms of the form
\[\scalebox{0.85}{\tikzfig{img/bayes-net-factor-1}}\]
where $\mathrm{pa}(i)$ denotes the set of parents of the vertex $i$, and where $j<i$ iff there is no directed path from $i$ to $j$ (Fong calls this the `ancestral' order on the vertices).

Each factor $X_i\mid \mathrm{pa}(i)$ only appears in the composite diagram as precomposed by a copier applied to its parents, and tensored with the identities of its grandparents.
Interpreting the analogous string diagram
\[\scalebox{0.85}{\tikzfig{img/graph-f}}\]
in $\Set$, we see that it corresponds to the graph of the function $f:X\to Y$, which is a section of the product projection $X\times Y\to X$; in other words, a term of $X\times Y$ over the context $X$.
Moreover, `tensoring' this diagram with extra objects $W$ corresponds to pulling back the section along the projection $W\times X\to X$; in other words, extending the context by $W$.
Thus, whereas \textcite{Fong2013Causal} constructed the preceding diagram by hand, with the right categorical setting we will be able to interpret it as resulting from standard type-theoretic operations.
Our first task will therefore be to establish a bifibration of measure kernels in which these operations may be interpreted.
Then, we will construct a double category whose horizontal composition implements these operations (and thus copy-composition) via `pull-push' in the bifibration.

\subsection{A bifibration of quasi-Borel kernels}

The classic categorical semantics for dependent type theory is in fibrations \parencite{Jacobs1999Categorical,nLabauthors2024Categorical}: the base category is interpreted as a category of `contexts'; the fibre over an object interprets the types in the corresponding context; elements in a fibre interpret terms; and pullback (base change) interprets substitution.
The classic example is the codomain fibration of a finitely complete category, in which bundles `display' types, and their sections are corresponding terms.
The category $\Meas$ of measurable spaces and measurable functions between them is a finitely complete category, and so this story may be told there.
But measurable functions are still deterministic; they are not kernels.

One may hope to replace the functions in the fibres by kernels and still obtain a fibration, but the image of a measurable subset under an arbitrary measurable map may not again be measurable\footnote{
This is guaranteed when the map is a continuous injection, by the Lusin-Souslin theorem \parencite[{Theorem 15.1}]{Kechris1995Classical}, but not in general.
Thanks to Ohad Kammar for pointing this out to me.},
which means that we cannot generally pull back traditional kernels.
Still, all is not lost, for we may turn our attention to quasi-Borel spaces \parencite{Heunen2017Convenient}, where the basic notion of \textit{measurable subset} is replaced by that of \textit{random element}, and the notions of measure and kernel are adjusted accordingly.
Additionally, there is an adjunction between $\Meas$ and the category $\QBS$ of quasi-Borel spaces that restricts to an equivalence on standard Borel spaces \parencite[Prop. 15]{Heunen2017Convenient}, so we do not stray too far from the familiar.

\begin{defn}[$\QBS$]
  Let $\Ibb$ denote the unit interval $[0,1]$.
A \textit{quasi-Borel space} is a pair $(X,M_X)$ of a set $X$ along with a subset $M_X \subseteq \Set(\Ibb,X)$ of functions $\Ibb\to X$ (\textit{random elements} of $X$), satisfying the following axioms:
\begin{enumerate}
\item (\textit{constants}) if $\rho:\Ibb\to X$ is a constant function, then $\rho\in M_X$;
\item (\textit{closure under measurable precomposition}) if $f:\Ibb\to\Ibb$ is measurable (\textit{i.e.}, $f\in\Meas(\Ibb,\Ibb)$) and $\rho\in M_X$, then $\rho\circ f \in M_X$;
\item (\textit{sheaf}) if $\Ibb$ is partitioned as $\Ibb = \coprod_{i\in\Nbb} S_i$ with each $S_i\in\Sigma_{\Ibb}$, and if, for each $i$, we have $\alpha_i\in M_X$, then $\coprod_{i\in\Nbb}\alpha_i \in M_X$.
\end{enumerate}
A morphism of quasi-Borel spaces $f:(X,M_X)\to (Y,M_Y)$ is a function $f \in \Set(X,Y)$ such that $\rho\in M_X$ entails $f\circ\rho\in M_Y$.
These morphisms compose as functions, yielding a category, $\QBS$.
\end{defn}

\begin{rmk}
  The definition of quasi-Borel space given by \textcite{Heunen2017Convenient} uses $\Rbb$ as the source of randomness, rather than $\Ibb$.
  But $\Rbb$ and $\Ibb$ equipped with their standard Borel structure are measurably isomorphic, so nothing is lost by this change, and it simplifies the presentation that follows.
\end{rmk}

Any quasi-Borel space $(X,M_X)$ can be made into a measurable space $(X,\Sigma_X)$ whose $\sigma$-algebra is given by $\Sigma_X = \{U\subseteq X \mid \forall \alpha \in M_X . \alpha^{-1}(U) \in \Sigma_\Ibb \}$, where $\Sigma_\Ibb$ is the Borel $\sigma$-algebra associated to $\Ibb$; this is Prop. 14 of \textcite{Heunen2017Convenient}.
By equipping the noise source $\Ibb$ with a measure, the random elements become random variables.
We can then push forward the noise to obtain measures on (the measurable space associated to) each quasi-Borel space.
Thus we can consider the set of probability measures on a quasi-Borel space to be the set of measures induced in this way.

\begin{defn}
  Let $\unif$ denote the uniform measure on $\Ibb$.
  A \textit{probability measure} on a quasi-Borel space $(X,M_X)$ is an equivalence class of elements of $M_X$, quotiented by equivalence of measure under pushforward of $\unif$.
  Let $P(X)$ denote the set of such measures on $(X,M_X)$, which we can write as $\{\alpha_*\unif \in G(X,\Sigma_X) \mid \alpha\in M_X\}$, where $\alpha_*\unif$ denotes the pushforward measure $U\mapsto \unif\bigl(\alpha^{-1}(U)\bigr)$.
  It has a quasi-Borel structure given by $M_{P(X)} = \{\beta:\Ibb\to P(X) \mid \exists \alpha:\Ibb\to M_X . \forall r\in \Ibb . \beta(r) = \alpha(r)_*\unif \}$.
\end{defn}

$P$ is then made into a functor $\QBS\to\QBS$ by $P(f) : \alpha_*\unif \mapsto (f\circ\alpha)_*\unif$, and finally upgraded to a monad by analogy with the Giry monad $G:\Meas\to\Meas$; this is the content of §V.D of \textcite{Heunen2017Convenient}.
(Equivalently, $P$ is a quotient of the $\Ibb$-continuation monad by equivalence of measure.)
A kernel between quasi-Borel spaces is then a morphism in the Kleisli category $\Kl(P)$, which we denote with squiggly arrows.
The standard inclusion of $\QBS$ into $\Kl(P)$, given by post-composing with the monad unit, maps each function $f \in \QBS(X,Y)$ to a deterministic kernel $\delta_f:X\kto Y$.

We will use $\Kl(P)$ to define a bifibration $\Ka$ over $\QBS$, such that the objects of each fibre $\Ka_B$ will be (deterministic) functions into $B$, and the morphisms $(E,\pi)\kto(E',\pi')$ in $\Ka_B$ will be kernels $E\kto E'$ ``fibrewise over $B$''.

\begin{defn}[The fibres of $\Ka$]
  Let $B$ be a quasi-Borel space; we define a corresponding category $\Ka_B$.
  Its objects are pairs $(E,p)$ of a quasi-Borel space $E$ and a quasi-Borel function $p:E\to B$.
  A morphism $k:(E,p)\to(E',p')$ is a kernel $k:E\kto E'$ such that $\delta_{p'}\circ k = \delta_p$; we call these morphisms \textit{kernels over} $B$.
  Composition of morphisms is given by the Kleisli composition of the corresponding kernels; identities are given by identity kernels.
  (It is easy to see that this yields a well-defined category.)
\end{defn}

To see that the morphisms of $\Ka_B$ really are kernels fibrewise over $B$, let us introduce some notation.

\begin{notation}[Fibre notation]
  Suppose $p:E\to B$ is a function and $b:J\to B$ is a generalized element of $B$ (another function).
  The fibre of $p$ over $b$ is given by the pullback of $p$ along $b$:
  \[\begin{tikzcd}[cramped,sep=scriptsize]
    {p[b]} & E \\
    J & B
    \arrow["{\pi_E}", from=1-1, to=1-2]
    \arrow["p", from=1-2, to=2-2]
    \arrow["b"', from=2-1, to=2-2]
    \arrow["{b^*p}"', from=1-1, to=2-1]
    \arrow["\lrcorner"{anchor=center, pos=0.125}, draw=none, from=1-1, to=2-2]
  \end{tikzcd}\]
  We denote the fibre by $p[b]\xto{b^*p}J$.
\end{notation}

\begin{prop}[Morphisms in $\Ka_B$ are fibrewise kernels] \label{prop:fib-krn}
  Suppose $k:(E,p)\kto(E,p')$ is a kernel over $B$, and $b:J\to B$ is a generalized element of $B$.
  Then $k$ restricts to a kernel $k[b]:p[b]\kto p'[b]$.
  \begin{proof}
    $\delta_p$ maps each $x\in E$ to the equivalence class generated by the constant random element $r\mapsto p(x)$.
    So the condition $\delta_{p'}\circ k = \delta_p$ means that $p(x) = p'(\ovln{k(x)}(r))$, for all $x\in E$, $r\in\Ibb$, and elements $\ovln{k(x)}$ of the equivalence class corresponding to $k(x)$.
    Let $\pi_E:p[b]\to E$ be the projection as in the preceding pullback square; and define $k[b]$ by mapping $x\in p[b]$ to $k(\pi_E(x))$.
    Let $j = b^*p(x)$.
    By assumption, we have $p(\pi_E(x)) = b(j)$.
    We need to verify that for all $r\in\Ibb$, $\ovln{k(\pi_E(x))}(r) \in p'[b]$.
    By definition, we have $p'(\ovln{k(\pi_E(x))}(r)) = p(\pi_E(x)) = b(j)$.
    So $\ovln{k(\pi_E(x))}(r)$ is in the fibre of $p'$ over $b(j)$, which is contained within $p'[b]$.
    So $\ovln{k(\pi_E(x))}(r) \in p'[b]$ as required.
  \end{proof}
\end{prop}

It will be very useful to extend this fibrewise-restriction to composite kernels; fortunately, it commutes with composition.
For the proof, see \secref{sec:krn-proofs}.

\begin{prop}[Fibrewise restriction commutes with composition] \label{prp:fib-res-comp}
  $(k\circ h)[b] = k[b]\circ h[b]$, whenever $k\circ h$ exists.
\end{prop}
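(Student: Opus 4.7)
The plan is to unfold both sides of $(k \circ h)[b] = k[b] \circ h[b]$ using Chapman-Kolmogorov composition together with the fibrewise-restriction formula, and then exploit the fact that any kernel-over-$B$ $h:(E,p)\kto(E',p')$ has each measure $h(e,-)$ supported on the appropriate fibre of $p'$.

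First, I would fix kernels-over-$B$ $h : (E, p) \kto (E', p')$ and $k : (E', p') \kto (E'', p'')$ and a generalized element $b : J \to B$. Using the set-theoretic description of the pullback, I would write $p[b] = \{(e,j) \in E \times J \mid p(e) = b(j)\}$ and similarly for $p'[b], p''[b]$, with the projections $\pi_E, \pi_{E'}, \pi_{E''}$ as in the fibre notation. Then by the Chapman-Kolmogorov rule,
\[
  (k \circ h)[b]\bigl((e,j), U_b\bigr) = \int_{y \in E'} h(e, dy)\, k\bigl(y, \pi_{E''}(U_b)\bigr),
\]
while $(k[b] \circ h[b])\bigl((e,j), U_b\bigr)$ is the analogous integral over $p'[b]$, computed via the projection-back definition of $h[b]$ and $k[b]$.

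Next, I would invoke the defining condition $\delta_{p'} \circ h = \delta_p$: this forces the measure $h(e,-)$ to be concentrated on $(p')^{-1}(b(j))$, whose points correspond bijectively with the slice of $p'[b]$ over $j \in J$ via $y \mapsto (y,j)$. Under this correspondence, $h[b]\bigl((e,j), -\bigr)$ is literally the pushforward of $h(e,-)$ along that map, and the analogous statement holds for $k$ replacing $h$. Consequently the integrand $k(y, \pi_{E''}(U_b))$ on the left-hand side vanishes off the fibre, so the $E'$-integral collapses to one over $(p')^{-1}(b(j))$, which transports via the bijection to exactly the right-hand-side integral over $p'[b]$.

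The main obstacle is bookkeeping for the generalized element $b : J \to B$ rather than a point: I must verify that the measurable structure on the pullback is compatible with the chosen projections, that the pushforward along $y \mapsto (y,j)$ genuinely realizes $h[b]$ in the relevant $\sigma$-algebra, and that reducing the $E'$-integral to a fibre integral and then to one over $p'[b]$ is justified. Fubini applies throughout because $h$ and $k$ are s-finite (by \textcite[Theorem~4]{Staton2017Commutative}), so once the bookkeeping is in hand the equality of the two iterated integrals is routine.
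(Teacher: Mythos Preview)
Your proposal is correct and follows essentially the same route as the paper: unfold $(k\circ h)[b]$ via Chapman--Kolmogorov and the restriction formula, use the condition $\delta_{p'}\circ h = \delta_p$ to see that $h(\pi_E(x_b),-)$ is supported on the fibre over $b$, and then recognize the resulting fibre integral as $k[b]\circ h[b]$. Your invocation of Fubini is unnecessary here since no order of integration is being exchanged, but otherwise the argument and its level of detail match the paper's closely.
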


It is moreover easy to check that fibrewise restriction preserves identity kernels, which, together with the preceding proposition, means that it is functorial.
In particular, given $b:J\to B$ in $\QBS$, fibrewise restriction yields a reindexing functor $\Delta_b:\Ka_B\to\Ka_J$.

\begin{cor}[Substitution] \label{cor:sub-func}
  Suppose $b:J\to B$ in $\QBS$.
  Then we have a functor $\Delta_b:\Ka_B\to\Ka_J$ as follows.
  On objects, $\Delta_b$ acts to map $E\xto{p}B$ to $p[b]\xto{b^*p}J$.
  On morphisms, it maps $k$ to $k[b]$.
\end{cor}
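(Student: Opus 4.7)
The plan is to verify the three functoriality requirements (well-definedness on objects, well-definedness on morphisms, preservation of identities and composition) in turn, leaning heavily on what has already been established.

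First, well-definedness on objects is immediate: the fibre $p[b]$ exists as a pullback in $\Meas$ (which is finitely complete), and the induced leg $b^*p:p[b]\to J$ is measurable by construction, so $(p[b], b^*p)$ is a genuine object of $\Ka_J$.

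The only substantive check is that $\Delta_b$ sends morphisms to morphisms. Given $k:(E,p)\to(E',p')$ in $\Ka_B$, i.e.\ a kernel satisfying $\delta_{p'}\circ k = \delta_p$, I need to verify that $k[b]:p[b]\kto p'[b]$ satisfies $\delta_{b^*p'}\circ k[b] = \delta_{b^*p}$ in $\Ka_J$. The strategy is to write both deterministic legs as $\delta$-kernels via Proposition~\ref{prp:func-krn}, and then apply Proposition~\ref{prp:fib-res-comp} to the composite on the left: $\delta_{b^*p'}\circ k[b] = (\delta_{p'})[b]\circ k[b] = (\delta_{p'}\circ k)[b] = (\delta_p)[b] = \delta_{b^*p}$. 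The first and last equalities reduce to unpacking how the Dirac kernel of a map restricts to its pullback, using the universal property of the pullback square defining $p[b]$ and $p'[b]$; these follow directly from the pointwise formula $k[b](e_b,U_b)=k(\pi_E(e_b),\pi_{E'}(U_b))$ given in the preceding proposition.

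Functoriality in the strict sense then follows at once: preservation of composition is exactly Proposition~\ref{prp:fib-res-comp}, while preservation of identities is the observation flagged just before the corollary that fibrewise restriction sends $\id_E = \delta_{\id_E}$ to $\delta_{\id_{p[b]}} = \id_{p[b]}$ (again by the pointwise formula).

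The only potential obstacle is the morphism-preservation step, and even there the difficulty is purely bookkeeping: one must check that the image $\pi_{E'}(U_b)$ used to define $k[b]$ interacts correctly with the indicator $[p'(e)\in\cdot]$ defining $\delta_{p'}$, so that the fibrewise restriction of $\delta_{p'}\circ k$ agrees pointwise with $\delta_{b^*p}$. This is a straightforward diagram chase in the pullback square, with no integration involved, so I do not anticipate any real difficulty.
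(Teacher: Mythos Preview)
Your proposal is correct and follows the same approach as the paper, which treats the corollary as immediate from Proposition~\ref{prp:fib-res-comp} together with the preceding remark that fibrewise restriction preserves identities. You are in fact more thorough than the paper, since you explicitly verify that $k[b]$ satisfies the triangle condition over $J$ (i.e.\ that $\Delta_b$ lands in $\Ka_J$ on morphisms), a step the paper leaves implicit.
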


\begin{prop}[The stochastic self-indexing, and resulting fibration] \label{prp:K-fib}
  Together with the substitution functors $\Delta$, the mapping $\Ka:B\mapsto\Ka_B$ defines a pseudo functor $\QBS\op\to\Cat$.
  Pseudo functoriality follows from the (pseudo) functoriality of pullback and the functoriality of composition.
  Applying the Grothendieck construction to this indexed category yields a fibration $\int\Ka\to\QBS$.
\end{prop}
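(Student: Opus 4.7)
The plan is to verify in turn the two assertions: (i) that the assignment $B\mapsto\Ka_B$ together with the substitution functors $\{\Delta_b\}$ defines a pseudo functor $\Meas\op\to\Cat$; and (ii) that its Grothendieck construction yields a fibration over $\Meas$.

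For (i), \cref{cor:sub-func} already supplies, for each $b:J\to B$, a functor $\Delta_b:\Ka_B\to\Ka_J$, with functoriality in each slot handled by \cref{prp:fib-res-comp} and the evident behaviour on identity kernels. What remains is to produce coherent comparison isomorphisms $\Delta_{\id_B}\cong\id_{\Ka_B}$ and $\Delta_b\circ\Delta_{b'}\cong\Delta_{b'\circ b}$ for composable $J\xto{b}B\xto{b'}B'$. On objects, both arise from the universal property of pullbacks in $\Meas$: pulling back $p:E\to B$ along $\id_B$ yields an object canonically isomorphic to $(E,p)$, and the pasting lemma for pullbacks furnishes a canonical iso $p[b'\circ b]\cong (p[b'])[b]$. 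On morphisms, naturality requires that these isos commute with fibrewise restriction of kernels: unpacking the defining formula $k[b](e_b,U_b)=k\bigl(\pi_E(e_b),\pi_{E'}(U_b)\bigr)$, both $(k[b'])[b]$ and $k[b'\circ b]$ agree once representatives are identified along the canonical pullback iso. The pentagon and triangle coherences then reduce directly to the corresponding coherences for iterated pullbacks in $\Meas$.

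I expect the most delicate step to be the naturality of the composition comparison on morphisms, since it requires tracking a kernel through two nested pullback squares and comparing with its restriction through the single pasted square. However, because fibrewise restriction is defined pointwise via the pullback projections, and the canonical iso between pullbacks induces a corresponding iso on their $\sigma$-algebras, the two restrictions agree strictly on representatives; the verification is mechanical and requires no integration, since the transformation occurs entirely at the level of measurable structure.

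For (ii), the standard Grothendieck construction \parencite{Jacobs1999Categorical} applied to any pseudo functor $\Ca\op\to\Cat$ yields a cloven fibration over $\Ca$. Concretely, objects of $\int\Ka$ are pairs $(B,(E,p))$ with $p:E\to B$ measurable; morphisms $(B,(E,p))\to(B',(E',p'))$ are pairs $(f,k)$ with $f:B\to B'$ in $\Meas$ and $k:(E,p)\to\Delta_f(E',p')$ a kernel over $B$; and cartesian lifts of $f$ at $(E',p')$ are furnished by the identity kernel $\id_{\Delta_f(E',p')}$ in $\Ka_B$. No additional verification is required beyond (i).
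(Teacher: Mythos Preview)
Your proposal is correct and aligns with the paper's own justification, which is embedded directly in the proposition statement rather than given as a separate proof: the paper simply asserts that ``pseudo functoriality follows from the (pseudo) functoriality of pullback and the functoriality of composition,'' and invokes the Grothendieck construction for the fibration. Your argument is a faithful unpacking of exactly this sentence---the comparison isomorphisms from the pullback pasting lemma, their naturality via the pointwise definition of $k[b]$, and the standard Grothendieck machinery---so there is no substantive divergence, only additional detail.
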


It remains to exhibit left adjoints to the substitution functors $\Delta_b$.

\begin{prop}[Dependent sum functor] \label{prp:K-dep-sum-func}
  Suppose $f:C\to B$ is a morphism in $\QBS$.
  Then there is a functor $\Sigma_f:\Ka_C\to\Ka_B$ defined on objects $E\xto{p}C$ by post-composition, $\Sigma_f(p) := E\xto{p}C\xto{f}B$ and on kernels as the identity.
\end{prop}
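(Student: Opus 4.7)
The plan is to check the two well-definedness conditions and then read off functoriality almost for free, since $\Sigma_f$ acts as the identity on the underlying kernels.

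First I would verify that $\Sigma_f$ is well-defined on objects. An object of $\Ka_C$ is a measurable function $p:E\to C$, and composition of measurable functions in $\Meas$ is measurable, so $f\circ p:E\to B$ is a legitimate object of $\Ka_B$.

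Next I would verify well-definedness on morphisms, which is the only step that requires any content. A morphism $k:(E,p)\to(E',p')$ in $\Ka_C$ is an s-finite kernel $k:E\kto E'$ satisfying $\delta_{p'}\circ k=\delta_p$ in $\sfKrn$. To see that the same kernel $k$ serves as a morphism $(E,f\circ p)\to(E',f\circ p')$ in $\Ka_B$, I must show $\delta_{f\circ p'}\circ k=\delta_{f\circ p}$. By \cref{prp:func-krn}, $\delta_{(-)}:\Meas\to\sfKrn$ is a functor, so $\delta_{f\circ p'}=\delta_f\circ\delta_{p'}$ and $\delta_{f\circ p}=\delta_f\circ\delta_p$. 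Then
\[
\delta_{f\circ p'}\circ k \;=\; \delta_f\circ\delta_{p'}\circ k \;=\; \delta_f\circ\delta_p \;=\; \delta_{f\circ p},
\]
using associativity of composition in $\sfKrn$ and the hypothesis on $k$.

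Finally, functoriality of $\Sigma_f$ is immediate: since $\Sigma_f$ sends each kernel to itself, it preserves composition (given by composition in $\sfKrn$, restricted to fibrewise kernels) and identities (which are Dirac kernels in both $\Ka_C$ and $\Ka_B$) tautologically. There is no real obstacle here; the only subtlety is to notice that the ``kernel over $C$'' condition transports along $f$ precisely because $\delta_{(-)}$ is functorial, so post-composition with $f$ in $\Meas$ corresponds to post-composition with $\delta_f$ in $\sfKrn$ and the constraint equation $\delta_{p'}\circ k=\delta_p$ is preserved by whiskering on the left with $\delta_f$.
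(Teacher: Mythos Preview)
Your proof is correct. The paper does not actually supply a proof of this proposition, treating the construction as routine; your verification is precisely the natural one, and in particular your use of the functoriality of $\delta_{(-)}$ (\cref{prp:func-krn}) to transport the ``over $C$'' condition to an ``over $B$'' condition is the intended content.
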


Note that, fibrewise, we have $\Sigma_f(p)[b] \cong p[f^*b]$.
Since $f^*$ is pullback in $\QBS$ and $\Delta$ is defined by pullback, this is the first sign of the adjointness of $\Sigma$ and $\Delta$.
See \secref{sec:krn-proofs} for the proof of the proposition:

\begin{prop}[$\Sigma \dashv \Delta$] \label{prp:K-sig-del-adj}
  Suppose $f:B\to C$ in $\QBS$.
  Then $\Sigma_f$ is left adjoint to $\Delta_f$.
\end{prop}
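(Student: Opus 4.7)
My plan is to exhibit a natural bijection
\[
\Ka_C(\Sigma_f p,\, q) \;\cong\; \Ka_B(p,\, \Delta_f q)
\]
for $p:E\to B$ in $\Ka_B$ and $q:F\to C$ in $\Ka_C$, and then check naturality. Unpacking definitions, the left-hand side consists of s-finite kernels $k:E\kto F$ satisfying $\delta_q\circ k=\delta_{f\circ p}$, while the right-hand side consists of kernels $h:E\kto q[f]$ satisfying $\delta_{f^*q}\circ h = \delta_p$, where $q[f]$ is the pullback of $q$ along $f$ with projections $f^*q:q[f]\to B$ and $\pi_F:q[f]\to F$.

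The easier direction is \emph{from right to left}: given $h:E\kto q[f]$ over $B$, set $k := \delta_{\pi_F}\circ h$. Using the pullback identity $q\circ\pi_F = f\circ f^*q$ and functoriality of $\delta_{(-)}$ (\cref{prp:func-krn}), one computes $\delta_q\circ k = \delta_{q\circ\pi_F}\circ h = \delta_f\circ\delta_{f^*q}\circ h = \delta_f\circ\delta_p = \delta_{f\circ p}$, so $k$ lies in $\Ka_C(\Sigma_f p, q)$.

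The more delicate direction is \emph{from left to right}. Given $k:E\kto F$ with $\delta_q\circ k=\delta_{f\circ p}$, first observe that evaluating $\delta_q\circ k$ at the full set $C$ forces $k(e,F)=1$ and, more strongly, that $k(e,-)$ is concentrated on $q^{-1}(f(p(e)))$ for each $e\in E$. Since $q[f]\subseteq B\times F$ is precisely the set of pairs $(b,e')$ with $f(b)=q(e')$, for every $e\in E$ the map $e'\mapsto(p(e),e')$ lifts $q^{-1}(f(p(e)))$ into $q[f]$. I therefore define the lift $h:E\kto q[f]$ by
\[
h(e, V) := k\bigl(e,\; \{e'\in F : (p(e), e')\in V\}\bigr),
\]
which is a measure in $V$ because $k(e,-)$ is, and is measurable in $e$ because the assignment $e\mapsto\{e': (p(e),e')\in V\}$ is built from the measurable $p$ and the measurability of $k$ in its first argument (this requires a short monotone-class argument, using that $V$ ranges over the $\sigma$-algebra of the pullback, which is generated by rectangles). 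One then verifies $\delta_{f^*q}\circ h = \delta_p$ directly, and that $\delta_{\pi_F}\circ h = k$, so the two constructions are mutually inverse.

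Naturality in $p$ and $q$ follows from the functoriality of $\delta_{(-)}$, associativity of Chapman-Kolmogorov composition (\cref{def:sfkrn}), and the fact that fibrewise restriction is functorial (\cref{prp:fib-res-comp}); each naturality square reduces to rearranging an iterated integral, which is justified by Fubini for s-finite kernels \parencite[Theorem 4]{Staton2017Commutative}. The principal obstacle is the middle step: checking the measurability and well-definedness of the lift $h$, since the pullback $\sigma$-algebra on $q[f]$ must be handled carefully, and in particular the identity $h(e,V)=k(e,\pi_F(V\cap(\{p(e)\}\times F)))$ requires the slice $\{e':(p(e),e')\in V\}$ to depend measurably on $e$, which is where the bulk of the measure-theoretic work lies.
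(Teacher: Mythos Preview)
Your approach is essentially the paper's: both exhibit an explicit bijection of hom-sets and then verify naturality. The ``easy'' direction is identical (your $k=\delta_{\pi_F}\circ h$ is exactly the paper's $\beta^\flat(x,V)=\beta(x,\pi_F^{-1}(V))$). For the inverse, the paper sets $\gamma^\sharp(x,U):=\gamma(x,\pi_F(U))$ using the \emph{forward image} of $U$, whereas you use the fibre-slice $\{e':(p(e),e')\in V\}$; these agree once one uses that $\gamma(x,-)$ is concentrated over $f(p(x))$, but your version is manifestly countably additive in $V$ and correctly isolates the only real technical point (measurable dependence of the slice on $e$), which the paper's image-based formula leaves implicit. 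The paper checks naturality via an explicit fibrewise calculation rather than your appeal to Fubini and functoriality of $\delta_{(-)}$, but the content is the same.
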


Thus $\Ka$ yields a bifibration.
But this is not all we need: for copy-composition to be well-defined via pull-push, we need the functors $\Sigma$ and $\Delta$ to satisfy the Beck-Chevalley condition.
This result follows because the self-indexing of $\QBS$ satisfies Beck-Chevalley; but an explicit proof may be found in \secref{sec:krn-proofs}.

\begin{prop}[$\Ka$ satisfies Beck-Chevalley] \label{prp:K-b-c}
  Suppose we have a pullback square in $\QBS$ of the form
\[\begin{tikzcd}[sep=small]
	& P \\
	E && F \\
	& B
	\arrow["\pi"', from=1-2, to=2-1]
	\arrow["\rho", from=1-2, to=2-3]
	\arrow["p"', from=2-1, to=3-2]
	\arrow["q", from=2-3, to=3-2]
	\arrow["\lrcorner"{anchor=center, pos=0.125, rotate=-45}, draw=none, from=1-2, to=3-2]
\end{tikzcd} \; .\]
  Then there is a natural isomorphism $\Sigma_\rho\,\Delta_\pi \, \cong \, \Delta_q\,\Sigma_p$.
\end{prop}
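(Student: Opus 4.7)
The plan is to unfold both composites on an arbitrary object $(E',p') \in \Ka_E$ (with $p' : E' \to E$) and invoke the pullback-pasting lemma in $\Meas$. On one side, $\Sigma_p(E',p') = (E', p \circ p')$ and $\Delta_q$ then pulls back along $q$ to yield the bundle $(p \circ p')[q] \xto{q^*(p\circ p')} F$. On the other side, $\Delta_\pi$ pulls back $p'$ along $\pi$ to yield $p'[\pi] \xto{\pi^* p'} P$, and $\Sigma_\rho$ post-composes with $\rho$ to give $p'[\pi] \xto{\rho \circ \pi^* p'} F$. Stacking the pullback of $p'$ along $\pi$ on top of the given pullback square and applying pasting exhibits $p'[\pi]$ as the pullback of $p \circ p'$ along $q$; this produces the canonical isomorphism $(p \circ p')[q] \cong p'[\pi]$ over $F$, which will serve as the object component of the desired natural isomorphism.

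For the action on morphisms, recall that $\Sigma_f$ is the identity on kernels, so $\Delta_q \Sigma_p$ sends a morphism $k : (E',p') \to (E'',p'')$ in $\Ka_E$ to the single fibrewise restriction $k[q]$, while $\Sigma_\rho \Delta_\pi$ sends $k$ to $k[\pi]$. Using the explicit formula $k[b](e_b, U_b) = k(\pi_{E'}(e_b), \pi_{E''}(U_b))$ from the fibrewise-kernel proposition, together with the fact that the pasting isomorphism identifies the canonical projections from $(p \circ p')[q]$ and $p'[\pi]$ down to $E'$ and $E''$, one finds that $k[q]$ and $k[\pi]$ agree pointwise under the transport. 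Naturality in $k$ then reduces to Proposition~\ref{prp:fib-res-comp} together with preservation of identity kernels.

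The main obstacle is not the object-level isomorphism, which is a routine application of pullback pasting, but verifying coherence at the level of \emph{kernels} rather than merely at the level of the underlying measurable spaces: one must unwind the kernel-restriction formula and the universal property of the pullback to confirm that the two candidate kernels really are the same function $X \times \Sigma_Y \to [0,\infty]$ after identification. Once this is in place, (pseudo)functoriality of pullback in $\Meas$ guarantees that the fibrewise isomorphisms $(p \circ p')[q] \cong p'[\pi]$ are natural in $(E',p')$ and assemble into the claimed isomorphism $\Sigma_\rho \, \Delta_\pi \cong \Delta_q \, \Sigma_p$ of functors $\Ka_E \to \Ka_F$.
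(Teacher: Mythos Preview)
Your argument is correct and, like the paper's, ultimately reduces Beck--Chevalley to the pullback-pasting lemma in $\Meas$; the difference is which square gets pasted onto the given one. You stack the pullback defining $\Delta_\pi(p')$ on top, varying the bundle $p':E'\to E$, and read off $(p\circ p')[q]\cong p'[\pi]$ over $F$. The paper instead works from below: it pulls back along a generalized element $i:I\to F$ and uses the fibrewise identities $\Sigma_\rho\,\Delta_\pi(\alpha)[i]\cong\alpha[\pi\circ\rho^*i]$ and $\Delta_q\,\Sigma_p(\alpha)[i]\cong\alpha[p^*(q\circ i)]$ (already established in the course of proving $\Sigma\dashv\Delta$), so that everything reduces to checking $\pi\circ\rho^*i = p^*(q\circ i)$ by pasting. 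The paper's route has the virtue of treating objects and morphisms uniformly via the same fibrewise formula, so no separate kernel-level check is required; your route is more self-contained (it does not invoke the adjunction machinery) but, as you rightly flag, needs the explicit restriction formula to confirm that $k[q]$ and $k[\pi]$ agree under the pasting isomorphism. One small correction: naturality does not really come from Proposition~\ref{prp:fib-res-comp}; as in the paper, it follows directly from the universal property of pullback, since the pasting isomorphism is already natural in $(E',p')$ and your identification of $k[q]$ with $k[\pi]$ is exactly the statement that the naturality square commutes.
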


\begin{rmk}
  In fact, the bifibration $\int\Ka$ has more structure than we have space to explore in the present study: each fibre has a tensor product that makes it into a monoidal fibration; additionally, if we extend to \textit{s-finite} kernels \parencite{Vakar2018SFinite}, each fibre has biproducts (and, with an inconsequential modification, a zero object); and thus the subcategory of probability kernels in each fibre is a \textit{commutative effectus} \parencite{Cho2015Introduction}.
  Moreover, these structures interact nicely with the base change functors.
  We leave the exposition of all this to future work.
\end{rmk}

\subsection{Decorating spans with sections}

Our task now is to define a categorical structure in which morphisms include graphs (in the functional sense) and compose by copy-composition.
That is, given the `graphs' of $f:X\to Y$ and $g:Y\to Z$ in some underlying monoidal category, we wish to be able to compose them to yield
\[\scalebox{0.85}{\tikzfig{img/graph-gf}} \; .\]
This composite is a section of the composite projection $X\times Y\times Z\to X\times Y\to X$.
In turn, we may observe that this projection is the left leg of the composite span
\[\begin{tikzcd}[cramped,sep=scriptsize]
	&& {X\times Y\times Z} \\
	& {X\times Y} && {Y\times Z} \\
	X && Y && Z
	\arrow["{\pi^{XY}_X}"', from=2-2, to=3-1]
	\arrow["{\pi^{XY}_Y}", from=2-2, to=3-3]
	\arrow["{\pi^{YZ}_Y}"', from=2-4, to=3-3]
	\arrow[from=1-3, to=2-2]
	\arrow[from=1-3, to=2-4]
	\arrow["{\pi^{YZ}_Z}", from=2-4, to=3-5]
	\arrow["\lrcorner"{anchor=center, pos=0.125, rotate=-45}, draw=none, from=1-3, to=3-3]
\end{tikzcd}\]
and that the graphs of $f$ and $g$ are sections of the left legs of the constituent spans, respectively.
Thus, if we can interpret $\mathsf{graph}(f)$ and $\mathsf{graph}(g)$ bifibrationally, we could write their composite (depicted above) as 
$\Sigma_{\pi^{XY}_X}\,\Delta_{\pi^{XY}_Y}\,\bigl(\mathsf{graph}(g)\bigr)\circ\mathsf{graph}(f)$.
The operation $\Sigma_{\pi^{XY}_X}\,\Delta_{\pi^{XY}_Y}$ is known as a ``pull-push'' operation \parencite{Schreiber2014Quantization}, because one first pulls back along $\pi^{XY}_Y$ and then pushes forward along $\pi^{XY}_X$.%

When $f$ and $g$ are morphisms in a finitely complete category, we can immediately make sense of this pull-push composition in the codomain fibration.
Likewise, when they are s-finite kernels, we can interpret this story in $\Ka$, and this is the situation that allows us to construct Bayesian networks and recover the chain rule of the relative entropy.
Nonetheless, all we require is a bifibration satisfying Beck-Chevalley (and a section of it).
The idea is then to decorate spans in the base of the bifibration with sections of the left leg, and then to compose them by pullback on the spans and pull-push on the decorations.
Thus we have a ``decorated span'' construction, in the sense of \textcite{Fong2015Decorated} and \textcite{Patterson2023Structured}.

\textcite{Patterson2023Structured} observed that adding compositional decorations to categorical constructions often signals a Grothendieck fibration.
Since spans (and cospans) collect into double categories, the decorated (co)span construction can be reconceived as a double Grothendieck construction, giving an extra dimension of composition to that originally proposed by \textcite{Fong2015Decorated}, and newly allowing the decorations to depend on whole (co)spans (rather than merely their apices), a freedom we will need.
Following \textcite[Theorem 3.51]{Cruttwell2022Double}, the double Grothendieck construction produces a double fibration from an indexed double category, \textit{i.e.} a lax double pseudo functor whose codomain is the double 2-category $\SSpan(\Cat)$ of spans of categories; see \textcite[§3.1]{Patterson2023Structured} for a summary.

\begin{defn} \label{def:lax-dbl-func}
  A \textit{lax double pseudo functor} $F:\Bbb{C}\to\Bbb{D}$ consists of a pair of pseudo functors $F_0:\Bbb{C}_0\to\Bbb{D}_0$ (from the vertical category of $\Bbb{C}$ to that of $\Bbb{D}$) and $F_1:\Bbb{C}_1\to\Bbb{D}_1$ (from the horizontal category of $\Bbb{C}$ to that of $\Bbb{D}$), along with pseudo natural transformations $\mu$ and $\eta$ witnessing the compatibility of $F_1$ and $F_0$ with the double category structures of $\Bbb{C}$ and $\Bbb{D}$, as in the diagrams
\[
\begin{tikzcd}
	{\Bbb{C}_1\times_{\Bbb{C}_0}\Bbb{C}_1} && {\Bbb{C}_1} \\
	\\
	{\Bbb{D}_1\times_{\Bbb{D}_0}\Bbb{D}_1} && {\Bbb{D}_1}
	\arrow["{\diamond_{\Bbb{C}}}", from=1-1, to=1-3]
	\arrow["{F_1\times F_1}"', from=1-1, to=3-1]
	\arrow["{\diamond_{\Bbb{D}}}"', from=3-1, to=3-3]
	\arrow["{F_1}", from=1-3, to=3-3]
	\arrow["\mu", shorten <=19pt, shorten >=19pt, Rightarrow, from=3-1, to=1-3]
\end{tikzcd}
\qquad\qquad
\begin{tikzcd}
	{\Bbb{C}_0} && {\Bbb{C}_1} \\
	\\
	{\Bbb{D}_0} && {\Bbb{D}_1}
	\arrow["{I_{\Bbb{C}}}", from=1-1, to=1-3]
	\arrow["{I_{\Bbb{D}}}", from=3-1, to=3-3]
	\arrow["{F_0}"', from=1-1, to=3-1]
	\arrow["{F_1}", from=1-3, to=3-3]
	\arrow["\eta", shorten <=17pt, shorten >=17pt, Rightarrow, from=3-1, to=1-3]
\end{tikzcd}
\]
such that four standard coherence axioms \parencite[Def. 3.12]{Cruttwell2022Double} are satisfied.
($\diamond$ denotes the `external' (2-cell) composition of a double category structure and $I$ its unit, so that, in the context of indexed double categories, $\mu$ implements the \textit{indexed} external composition, and $\eta$ implements \textit{its} unit.)

An \textit{indexed double category} is a lax double pseudo functor with codomain $\SSpan(\Cat)$.
\end{defn}

Suppose now that $\Ea\xto{\pi}\Ba$ is a bifibration satisfying Beck-Chevalley, equipped with a section $\iota:\Ba\to\Ea$, where $\Ba$ has all pullbacks; we will continue to write $\Ea_B$ for the fibre over $B:\Ba$, and $\Sigma\dashv\Delta$ for the base change functors, and assume that $\Delta$ preserves $\iota$.
Our model for $\Ea$ will be either $\int\Ka$ or the codomain fibration of a finitely complete category; and for $\iota$ the mapping $B\mapsto\bigl(\id_B:B\to B\bigr)$, so that morphisms out of $\iota$ are understood as sections (and thus models for terms).
We will construct an indexed double category $S:\SSpan(\Ba)_{pb}\op\to\SSpan(\Cat)$, where $\SSpan(\Ba)_{pb}$ is the sub-double category of $\SSpan(\Ba)$ whose 2-cells are Cartesian morphisms of spans (a restriction necessary for morphisms of decorations to be well-defined), and the $\op$ is taken vertically.

The vertical category of $\SSpan(\Ba)_{pb}$ is $\Ba$, and the vertical 2-category of $\SSpan(\Cat)$ is $\Cat$; but the vertical decorations will be trivial: $S_0$ is simply the constant functor on the terminal category $\mathbf{1}$.

There is a simplification in the horizontal direction, too.
The horizontal category of $\SSpan(\Ba)_{pb}$ is $\Ba^{\{\bullet\leftarrow\bullet\to\bullet\}}_{pb}$, the category of diagrams in $\Ba$ of shape $\bullet\leftarrow\bullet\to\bullet$, with morphisms restricted to those natural transformations that are Cartesian (\textit{i.e.}, constituting pullback squares).
$S_1$ must map a span $m=\bigl(A\xleftarrow{a}E\xto{b}B\bigr)$ in $\Ba$ to a span $\mathbf{1}\leftarrow S_1(m)\to\mathbf{1}$ of categories.
Because $\mathbf{1}$ is terminal in $\Cat$, the legs of the latter span are accordingly determined.
There is a further simplification: $S_1(m)$ will be a discrete category (a set).
Indeed, we define $S_1(m)$ to be the set of sections of the left leg of $m$ in $\Ea$.
That is, $S_1(m) := \Ea_A(\iota A, \Sigma_a\,\iota E)$.

Given a morphism $f:m\to m'$ in $\SSpan(\Ba)_{pb}$ as in the diagram
\[\begin{tikzcd}[sep=scriptsize]
	A & E & B \\
	{A'} & {E'} & {B'}
	\arrow["a"', from=1-2, to=1-1]
	\arrow["{f_l}"', from=1-1, to=2-1]
	\arrow["{a'}"', from=2-2, to=2-1]
	\arrow[from=1-2, to=2-2]
	\arrow["\llcorner"{anchor=center, pos=0.125}, draw=none, from=1-2, to=2-1]
	\arrow["b", from=1-2, to=1-3]
	\arrow["{b'}", from=2-2, to=2-3]
	\arrow["{f_r}", from=1-3, to=2-3]
	\arrow["\lrcorner"{anchor=center, pos=0.125}, draw=none, from=1-2, to=2-3]
\end{tikzcd} \; ,\]
$S_1(f)$ must be a function $\Ea_{A'}(\iota A', \Sigma_{a'}\,\iota E')\to\Ea_A(\iota A, \Sigma_a\,\iota E)$, which we take to be given by $\Delta_{f_l}$.
This is well defined because $\Delta_{f_l}\,\iota A' = \iota_A$ \textit{ex hypothesi}, $\Delta_{f_l}\,\Sigma_{a'}\,\iota E' \cong \Sigma_a\,\Delta_f\,\iota E'$ by Beck-Chevalley, and $\Delta_f\,\iota E' = \iota E$ \textit{ex hypothesi}.
Moreover, since $\Delta_{f_l}$ is functorial, so is $S_1(f)$.

The copy-composition of sections is implemented by the transformation $\mu$, whose component at a pair of composable spans $m=\bigl(A\xleftarrow{a}E\xto{b}B\bigr)$ and $n=\bigl(B\xleftarrow{b'}F\xto{c}C\bigr)$ must be a function $\mu_{m,n}:S_1(m)\times S_1(n) \to S_1(n\diamond m)$, which we take to be defined by pull-push.
That is, $\mu_{m,n}(\sigma,\tau) := \Sigma_a\,\Delta_b(\tau)\circ\sigma$.
Associativity of $\mu$ follows by Beck-Chevalley; the explicit proof is in Appendix \ref{apdx:pull-push}.
The unit $\eta$ is defined over each identity span $E=E=E$ by the function $\eta_E:\mathbf{1}\to\Ea_E(\iota E,\iota E)$ determined by the identity on $\iota_E$.
Note that this makes pull-push copy-composition strictly unital, unlike in \textcite{Smithe2023Approximate}.

Applying the double Grothendieck construction \parencite[Theorem 3.51]{Cruttwell2022Double} to $S$ yields a double category $\Sbb$, doubly fibred over $\SSpan(\Ba)_{pb}$, which has the following structure:
\begin{enumerate}
\item The objects of $\Sbb$ are the objects of $\Ba$.
\item A vertical 1-cell is a morphism of $\Ba$, and vertical composition is as in $\Ba$.
\item A horizontal 1-cell $A\nrightarrow B$ is a quadruple $(E,a,b,\sigma)$ where $\bigl(A\xleftarrow{a}E\xto{b}B\bigr)$ is a span with $\sigma$ being a section of the left leg in $\Ea_A$; \textit{i.e.}, $\sigma \in \Ea_A(\iota A,\Sigma_a\,\iota E)$.
\item A 2 cell from $(E,a,b,\sigma):A\nrightarrow B$ to $(E',a',b',\sigma'):A'\nrightarrow B'$ is a Cartesian morphism of spans $(f_l,f,f_r):\bigl(A\xleftarrow{a}E\xto{b}B\bigr)\Rightarrow\bigl(A'\xleftarrow{a'}E'\xto{b'}B'\bigr)$ such that $\sigma = \Delta_{f_l}\,\sigma'$.%
\item Horizontal composition is by pullback on the spans (and their morphisms) and by pull-push on the sections, as described above.
\end{enumerate}

That is to say, we have the following theorem (given the details in Appendix \ref{apdx:pull-push}).

\begin{thm}[Copy-composition of sections] \label{thm:S}
  Given a bifibration $\Ea\to\Ba$ over a finitely complete base, satisfying Beck-Chevalley, and equipped with a section $\iota:\Ba\to\Ea$ preserved by pullback, the foregoing data define a pseudo double category $\Sbb$ doubly fibred over $\SSpan(\Ba)_{pb}$.
\end{thm}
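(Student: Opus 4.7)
The plan is to verify that the data $(S_0, S_1, \mu, \eta)$ described before the statement assemble into a lax double pseudo functor $S:\SSpan(\Ba)_{pb}\op\to\SSpan(\Cat)$ in the sense of \defref{def:lax-dbl-func}, and then to invoke the double Grothendieck construction of \textcite[Theorem 3.51]{Cruttwell2022Double} to extract the pseudo double category $\Sbb$ together with its double fibration onto $\SSpan(\Ba)_{pb}$. The itemised description of $\Sbb$ then matches the one generated by the construction.

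First I would confirm that the vertical and horizontal components are (pseudo) functorial. Pseudo functoriality of $S_0$ is immediate because it is the constant functor on $\mathbf{1}$. For $S_1$, well-definedness on 2-cells rests on the three facts already identified in the text: $\Delta_{f_l}\,\iota A' \cong \iota A$ (preservation of $\iota$), $\Delta_{f_l}\,\Sigma_{a'}\,\iota E' \cong \Sigma_a\,\Delta_f\,\iota E'$ (Beck-Chevalley applied to the left Cartesian square of the span morphism), and $\Delta_f\,\iota E'\cong\iota E$ (preservation of $\iota$ again). Functoriality of $S_1$ with respect to vertical composition of Cartesian span morphisms then reduces to the pseudo functoriality of the reindexing family $\Delta$ inherited from the bifibration.

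The principal work lies in verifying that $\mu$ is a pseudo natural transformation, i.e., that pull-push copy-composition is associative up to coherent isomorphism and compatible with morphisms of spans. Given three composable spans $m=(A\xleftarrow{a}E\xto{b}B)$, $n=(B\xleftarrow{b'}F\xto{c}C)$, $p=(C\xleftarrow{c'}G\xto{d}D)$, and sections $\sigma,\tau,\upsilon$ of their left legs, I would unfold both associates of the composite. On the left, $\mu_{n\diamond m,\,p}(\mu_{m,n}(\sigma,\tau),\upsilon) = \Sigma_{a\circ\pi_E}\,\Delta_{c\circ\pi_F}(\upsilon)\circ\Sigma_a\,\Delta_b(\tau)\circ\sigma$, where $\pi_E,\pi_F$ are the legs of the pullback $E\times_B F$; on the right, $\mu_{m,\,p\diamond n}(\sigma,\mu_{n,p}(\tau,\upsilon)) = \Sigma_a\,\Delta_b\bigl(\Sigma_{b'}\,\Delta_c(\upsilon)\circ\tau\bigr)\circ\sigma$. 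The required isomorphism then follows by applying \cref{prp:K-b-c}-style Beck-Chevalley to the pullback square witnessing $E\times_B F$, together with pseudo functoriality of $\Sigma$ and $\Delta$ and the interchange $\Delta_b\,\Sigma_{b'} \cong \Sigma_{\pi_E}\,\Delta_{\pi_F}$ (and similarly for the iterated pullback over $C$). This assembly of Beck-Chevalley isomorphisms into a single coherent 2-cell is where I expect the main bookkeeping obstacle; naturality of $\mu$ in Cartesian span morphisms follows from a further, structurally identical, Beck-Chevalley calculation.

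Finally, the unit $\eta_E:\mathbf{1}\to\Ea_E(\iota E,\iota E)$ picks out $\id_{\iota E}$, and the two unit coherences $\Sigma_{\id}\,\Delta_{\id}(\tau)\circ\id \cong \tau$ and $\Sigma_a\,\Delta_b(\id_{\iota B})\circ\sigma \cong \sigma$ hold strictly (using that $\Delta_{\id}$ and $\Sigma_{\id}$ are identities, and that $\Delta_b$ preserves the identity section because $\iota$ is preserved by pullback), yielding the strict unitality noted in the statement. With the four coherence axioms of \textcite[Def. 3.12]{Cruttwell2022Double} verified in this way, $S$ is a bona fide indexed double category, and \textcite[Theorem 3.51]{Cruttwell2022Double} delivers $\Sbb$ as a pseudo double category doubly fibred over $\SSpan(\Ba)_{pb}$.
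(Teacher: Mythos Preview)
Your proposal is correct and follows essentially the same route as the paper's proof: verify functoriality of $S_0$ and $S_1$, establish pseudo naturality and pseudo associativity of $\mu$ via a single Beck--Chevalley interchange $\Sigma_{\pi_E}\,\Delta_{\pi_F}\cong\Delta_b\,\Sigma_{b'}$ on the pullback over $B$, check strict unitality of $\eta$, and then invoke \textcite[Theorem 3.51]{Cruttwell2022Double}. One minor remark: your parenthetical ``and similarly for the iterated pullback over $C$'' is superfluous---once you unfold $\Sigma_{a\circ\pi_E}\,\Delta_{c\circ\pi_F}$ via pseudo functoriality as $\Sigma_a\,\Sigma_{\pi_E}\,\Delta_{\pi_F}\,\Delta_c$, the single Beck--Chevalley over $B$ already gives the associativity isomorphism, exactly as in the paper's computation.
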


By instantiating $\Sbb$ in $\int\Ka$, we can formally recover the factors of a Bayesian network described by \textcite{Fong2013Causal} and introduced at the beginning of this section.
There is a lax embedding of $\sfKrn$ horizontally into $\Sbb$ (so instantiated), which maps a kernel to its graph.
Let $\sigma$ denote the image of the kernel $X_i\mid\mathrm{pa}(i)$ under this embedding, which has the effect of precomposing it with copiers.
Then, to tensor with the identity wires $\otimes_{j\notin\mathrm{pa}(i)}\,X_j$, we can pull $\sigma$ back along the projection $\prod_{j<i}\,X_j \to \prod_{j\in\mathrm{pa}(i)}\,X_j$, as in the diagram
\[\begin{tikzcd}[cramped,row sep=small]
	{\prod_{j<i}\,X_j} & {\prod_{j\leq i}\,X_j} & {\prod_{j\leq i}\,X_j} \\
	\\
	{\prod_{j\in\mathrm{pa}(i)}\,X_j} & {\prod_{j\in\mathrm{pa}(i)}X_j\times X_i} & {X_i}
	\arrow[from=1-2, to=1-1]
	\arrow[from=3-2, to=3-1]
	\arrow[from=3-2, to=3-3]
	\arrow[Rightarrow, no head, from=1-2, to=1-3]
	\arrow[from=1-3, to=3-3]
	\arrow[from=1-1, to=3-1]
	\arrow[from=1-2, to=3-2]
	\arrow["\lrcorner"{anchor=center, pos=0.125, rotate=-90}, draw=none, from=1-2, to=3-1]
	\arrow["{\sigma_{\leq i}}", curve={height=-12pt}, dashed, from=1-1, to=1-2]
	\arrow["\sigma", curve={height=-12pt}, dashed, from=3-1, to=3-2]
\end{tikzcd} \; .\]
The upper decorated span is the horizontal morphism corresponding to Fong's factor.
Finally, note that whereas traditional statistical modelling is restricted to sections of product types (as above), using $\Ka$ and $\Sbb$ means we can build probabilistic models involving general dependent types.

\section{Undirected models: open factor graphs} \label{sec:fg}

\begin{figure}[h]
  \[\scalebox{0.8}{\tikzfig{img/factor-graph-2}}\]
  \caption{A factor graph, in the style of \textcite[§2.1.3]{Wainwright2007Graphical}.}
  \label{fig:fg-1}
\end{figure}

The preceding section explained how copy-composition is central to Bayesian networks: \textit{directed} probabilistic graphical models.
In this section, we shall see that it is also central to the construction of undirected models known as \textit{factor graphs} \parencite{Dauwels2007Variational,Wainwright2007Graphical}.
A factor graph is a graphical representation of the factorization of a function, which is often but not necessarily interpreted as the density of a probability distribution.
Thus Figure~\ref{fig:fg-1} represents a function $f$ of the form
$$f(a,b,c,d,e) = f_0(a,b)\, f_1(a,c)\, f_2(b,c,d)\, f_3(d,e)\, f_4(e)\, f_5(d)$$
where the variables have the types $a:A,b:B,c:C,d:D$.
Note that we could see $f$ as composed of `factors' $f_i$; it is this composition that we explore here.

Typically, the factors of a factor graph are functions into some object of scalars; for example, we might interpret $f_2$ (above) to have the type $B\times C\times D \to \Rb_+$, for some spaces $B,C,D$.
Should these spaces be $\mathbb{R}_+$-modules, then we might suppose $f_2$ to be a linear functional or \textit{covector} on their product.
$\Rb_+$-modules form a category $\Mod_{\Rb_+}$ which is equipped with a tensor product $\otimes$, whose unit is the 1-dimensional space of scalars $\Rb_+$, often denoted $I$.
Covectors on $X$ then correspond to \textit{costates}, morphisms $X\to I$.
This suggests that we may internalize the notion of factor in a monoidal category $(\Ca,\otimes,I)$ simply as the costates in $\Ca$.

This first step towards an abstract treatment should be taken with care, as any semicartesian monoidal category, such as $\Set$ or any Markov category (in the sense of \textcite{Fritz2019synthetic}), will only have trivial costates, the monoidal unit there being by definition terminal.
The composition of factors then proceeds by copy-composition; notice for example the duplication of the variable $b:B$ in the expression for $f$ above, or in the following string-diagrammatic representation of its $f_0,f_2$ factor:
\[\scalebox{0.8}{\tikzfig{img/factor-composition-1}}\]
This suggests that an appropriate categorical setting for factor graphs is that of \textit{copy-discard categories}\footnote{
A \textit{copy-discard category} is a monoidal category in which every object is equipped with a comonoid structure (with respect to the ambient monoidal structure) \parencite{Cho2017Disintegration}.
A \textit{Markov category} is a semicartesian copy-discard category \parencite{Fritz2019synthetic}.
Precisely because of the existence of non-trivial costates, copy-discard categories often end up being more useful in modelling practice than Markov categories, despite the utility of the latter for synthetic probability theory.}, of which $\sfKrn$ (or any fibre of $\Ka$) is an example.

Because factor graphs are a form of undirected graphical model, it is natural to expect them to fit into the framework of undirected wiring diagrams
\parencite{Spivak2013Operad} \parencite[Chapter 7]{Yau2018Operads}.
Undirected wiring diagrams are algebras for an operad of cospans \parencite{Fong2019Hypergraph}, and one way to obtain such algebras is through the decorated cospans construction, originally due to \textcite{Fong2015Decorated}.
(But note that the decorated spans of the previous section are not undirected in the same way, because the decorations there are asymmetric.)
The main result of this section is to show that `open' factor graphs may be obtained and composed precisely through the double-categorical generalization of this construction due to \textcite{Patterson2023Structured}.

\begin{rmk}
  \textcite{Fong2019Hypergraph} show that cospan-algebras are equivalent to hypergraph categories, in which each morphism has the form of a hypergraph.
  Consequently, this is also the case for factor graphs.%
\end{rmk}

The remainder of this section is dedicated to constructing a double category indexed by cospans, \textit{i.e.} a double pseudo functor $F$, and instantiating its double Grothendieck construction, the double category $\FG$.
The domain of $F$ is the double category of cospans of finite sets, whose vertical category $\CCospan(\FinSet)_0$ is simply $\FinSet$, and whose horizontal category $\CCospan(\FinSet)_1$ is the category of diagrams in $\FinSet$ of shape $\bullet\to\bullet\leftarrow\bullet$, \textit{i.e.} the functor category $\FinSet^{\{\bullet\to\bullet\leftarrow\bullet\}}$.
The codomain of $F$ is $\SSpan(\Cat)$, whose vertical 2-category is $\Cat$, and whose horizontal 2-category is $\Cat^{\{\bullet\leftarrow\bullet\to\bullet\}}$.

Henceforth, we will assume an ambient copy-discard category $(\Ca,I,\otimes)$.
Because finite sets are not ordered and we plan to define factors over accordingly indexed monoidal products, we must additionally assume that the monoidal structure $(I,\otimes)$ on $\Ca$ is symmetric.

There will be fewer simplifications involved in the definition of $F$ (versus $S$), and so we split the construction into four subsections.

\subsection{Vertical decoration: factors' interfaces}

The pseudo functor $F_0:\FinSet\to\Cat$ assigns categories of decorations to finite sets.
For us, these decorations will be the exposed interfaces of factors, given by a discrete diagram in $\Ca$: a functor $\disc X\to\Ca$, for some finite set $X$, where $\disc X$ is the discrete category on $X$.
However, were we just to define $F_0X$ to be the functor category $\Cat(\disc X, \Ca)$, we would run into problems later with the composition of 2-cells.
This is because vertical morphisms --- decorated by morphisms in the image of $F_0$ --- may be used to transform the interface types, but the composition of factors proceeds by copying.
For this to be compatible with morphisms of interfaces, we must restrict the latter to be comonoid homomorphisms, which (in probabilistic contexts) we can often understand as deterministic functions.

Therefore, on objects $X:\FinSet$, let $F_0X$ be the functor category $\Cat(\disc X, \Comon(\Ca,I,\otimes))$ where $\Comon(\Ca,I,\otimes)$ denotes the (wide) monoidal subcategory of $\Ca$ whose morphisms are the comonoid homomorphisms.
Note that, because the objects $\chi$ of $F_0X$ are functors on a discrete category, a morphism $\varphi:\chi\to\chi'$ (hence a natural transformation) is simply given by an $X$-indexed family of comonoid homomorphisms $\varphi_x:\chi(x)\to\chi'(x)$ in $\Ca$.

Given a morphism $f:X\to Y$ in $\FinSet$, we let $F_0$ return a functor $F_0X\to F_0Y$ which we denote by $f_*$ and define as follows.
If $\chi$ is an object of $F_0X$ then we define $f_*\chi$ by the mapping $y\mapsto\otimes_{x:f^{-1}(y)}\chi(x)$ (for $y:Y$).
Then, if $\varphi:\chi\to\chi'$ is a morphism of $F_0X$, we define each $y$-component $(f_*\varphi)_y$ of $f_*\varphi$ to be $\otimes_{x:f^{-1}(y)}\varphi_x$.
Note that this definition yields components of the correct type
$$(f_*\varphi)_y : (f_*\chi)(y) = \otimes_{x:f^{-1}(y)}\chi(x) \xrightarrow{\otimes_{x:f^{-1}(y)}\varphi_x} \otimes_{x:f^{-1}(y)}\chi'(x) = (f_*\chi')(y) \; .$$
Naturality is trivially satisfied, because $\chi$ and $\chi'$ are discrete functors, and the functoriality of the reindexing $f_*$ follows from the functoriality of $\otimes$.
Likewise, the pseudofunctoriality of $F_0$ itself follows from the functoriality of inverse images and the functoriality and symmetry of $\otimes$, so that $\otimes_{y:g^{-1}(z)} \otimes_{x:f^{-1}(y)} \cong \otimes_{x:(g\circ f)^{-1}(z)}$.

\subsection{Horizontal decoration: `open' factors}

The pseudo functor $F_1 : \FinSet^{\{\bullet\to\bullet\leftarrow\bullet\}} \to \Cat^{\{\bullet\leftarrow\bullet\to\bullet\}}$ assigns spans of categories to cospans of finite sets, compatibly with $F_0$.
We interpret a cospan $A\xrightarrow{a} X\xleftarrow{b} B$ as follows.
The apex set $X$ will be decorated with two kinds of data: first, a category of interfaces suited to factors in $\Ca$; and second, for each suitable interface, a category of factors on that interface (compatibly with morphisms of interfaces).
The legs of the cospan $a$ and $b$ will be used to expose those parts of the interface which are open to composition, and in this way the objects $A$ and $B$ will be decorated by $F_0A$ and $F_0B$ respectively; this is one of the indexed-double-categorical coherence axioms (`well-definition').

Thus, given a cospan $m=\bigl(A\xrightarrow{a} X\xleftarrow{b} B\bigr)$, we let $F_1$ return a span of categories $F_0A \xleftarrow{\pi_a} \int\underline{F_1}m \xrightarrow{\pi_b} F_0B$.
The apex category $\int\underline{F_1}m$ has for objects pairs $(\chi,f)$ where $\chi:\disc X\to\Ca$ is an interface and $f:\chi^\otimes\to I$ is a factor on that interface.
$\chi^\otimes$ denotes the monoidal product $\otimes_{x:X}\,\chi(x)$ in $\Ca$.
The morphisms of $\int\underline{F_1}m$ are factorizations of factors through transformations of interfaces; that is to say, a morphism $(\chi,f)\to(\chi',f')$ in $\int\underline{F_1}m$ is a morphism of interfaces $\varphi:\chi\to\chi'$ such that $f = \varphi^* f'$, where $\varphi^*$ denotes precomposition by $\varphi^\otimes := \otimes_{x:X}\,\varphi_x$.
The functors $\pi_a$ and $\pi_b$ forget the factors and project out those parts of the interfaces over $X$ that are exposed by $a$ and $b$ respectively.
($\int\underline{F_1}m$ and the functors $\pi_a,\pi_b$ are in fact obtained through universal constructions --- a limit and a Grothendieck construction --- as we explain in Appendix~\ref{sec:F1}.
Universality then yields the functorial action of $F_1$ on morphisms of cospans.)

\subsection{Factor composition} \label{sec:copy-comp}

The functors $F_0$ and $F_1$ describe how to attach factors to interfaces, and how these interfaces (and the factors on them) may be transformed by morphisms in $\Ca$.
But they do not tell us about the operation at the heart of the open factor graph construction: the horizontal (copy-)composition of factors, or the horizontal composition of their morphisms.
These operations are formalized by the external composition structure $(\mu,\eta)$ with which we now equip $F$.

Let us begin with $\mu$, a pseudo natural transformation whose component $\mu_{m,n}$ at a pair of composable cospans $m=\bigl(A\xrightarrow{a}X\xleftarrow{b}B\bigr)$ and $n=\bigl(B\xrightarrow{b'}Y\xleftarrow{c}C\bigr)$ must be a functor $\int\underline{F_1}m\times_{F_0B}\int\underline{F_1}n\to\int\underline{F_1}(n\diamond m)$.
Given factors $(\chi,f),(\gamma,g)$ in its domain, $\mu_{m,n}$ must return an interface over $X+_BY$ and a factor on that interface.
The former is given by the copairing $[\chi,\gamma]_B$ of $\chi$ and $\gamma$ over $B$, while the latter is given by copy-composition.
For each $j:X+_BY$, copying induces a morphism
$$\delta_{\chi,\gamma}^j:[\chi,\gamma]_B(j) \to \bigotimes_{i:[\iota^X_B,\iota^Y_B]^{-1}(j)}\, [\chi,\gamma](i)$$
in $\Ca$ which is unique up to coassociativity.
We then define $\delta_{\chi,\gamma} := \otimes_{j:X+_BY}\;\delta_{\chi,\gamma}^j$, which has the type $[\chi,\gamma]_B^\otimes \to [\chi,\gamma]^\otimes$.
We can compose $f\otimes g$ after this higher-arity copier to yield a factor on the interface $[\chi,\gamma]_B$.
Thus $\mu_{m,n}\bigl((\chi,f),(\gamma,g)\bigr) := \bigl([\chi,\gamma]_B,(f\otimes g)\circ\delta_{\chi,\gamma}\bigr)$.
(For a more detailed construction and proof that this yields a pseudo natural transformation, see Appendix~\ref{sec:F-mu}.)

To see $\mu$ in action, suppose $A$ has $l$-many elements, $C$ has $n$-many elements, and the coupled interface $B$ has $m$-many elements, with $B_1$ shared once between $f$ and $g$ but $B_m$ shared an arbitrary number of times.
Then $\mu\bigl((\chi,f),(\gamma,g)\bigr)$ yields the factor
\[\scalebox{0.8}{\tikzfig{img/factor-composition-2}} \; .\]

The naturality of $\mu_{m,n}$ means that if we were to have $f'$ and $g'$ defined by transforming the interface $B$ along $\beta$ like so
\[\scalebox{0.8}{
  \tikzfig{img/factor-comp-naturality-f}
  \quad and \quad
  \tikzfig{img/factor-comp-naturality-g}
}\]
then copy-composing $f'$ and $g'$ along $B'$ yields the same as copy-composing $f$ and $g$ along $B$ and then transforming the $B$ interface of the result accordingly:
\[\scalebox{0.8}{\tikzfig{img/factor-comp-naturality-1}}\]
However, this only works for `determinsitic' transformations $\beta$: although the rules of open factor graphs do allow us to transform factors using arbitrary morphisms, we can only pull these transformations `outside' the composites when the transformations are comonoid homomorphisms.

It remains to define the unit $\eta$, a pseudo natural transformation whose component $\eta_X$ at each finite set $X$ is given by a functor $\eta_X : F_0X \to \int\underline{F_1}(\id_X)$.
We define $\eta_X$ using the counits of the comonoid structures on the interfaces on $X$.
That is to say, $\eta_X$ maps an interface $\chi:\disc X \to \Comon(\Ca)$ to the pair $(\chi, \epsilon_\chi)$, where here $\epsilon_\chi$ denotes the canonical discarding map $\chi^\otimes \to I$.
The functorial action of $\eta_X$ is obtained from the fact that all morphisms in $F_0X$ are comonoid homomorphisms, so if $\varphi$ is a morphism $\chi \to \chi'$ then $\epsilon_\chi = \varphi^*\epsilon_{\chi'}$.
Pseudo naturality of $\eta$ obtains likewise, and its unitality with respect to $\mu$ follows from the counitality of the comonoid counits.

\subsection{Putting it all together}

We have defined a double pseudo functor $F:\CCospan(\FinSet)\to\SSpan(\Cat)$, from which we obtain a double category of open factor graphs $\FG$, doubly opfibred over $\CCospan(\FinSet)$, by applying the double Grothendieck construction.
This gives $\FG$ the following structure:
\begin{enumerate}
\item Its objects (0-cells) are pairs $(A,\alpha)$ where $A$ is a finite set and $\alpha$ is an object of $F_0A$ --- \textit{i.e.}, an interface in $\Ca$ on $A$.
\item A vertical 1-cell $(A,\alpha)\to(A',\alpha')$ is a pair $(f,\varphi)$ of a map $f:A\to A'$ and a deterministic transformation of interfaces $\varphi:f_*\alpha\to\alpha'$ --- where $f_* = F_0(f)$ so that $\varphi$ is a morphism in $F_0A'$.
\item A horizontal 1-cell $(A,\alpha)\nrightarrow(B,\beta)$ is a quintuple $(X,a,b,\chi,p)$ where $\bigl(A\xrightarrow{a}X\xleftarrow{b}B\bigr)$ is a cospan of finite sets, $\chi$ is an interface on $X$, and $p$ is a factor on $\chi$ --- \textit{i.e.}, $(\chi,p)$ is an object of $\int\underline{F_1}(X,a,b)$ --- such that $\pi_a(\chi,p) = \alpha$ and $\pi_b(\chi,p) = \beta$.
\item A 2-cell as in the diagram
  \begin{tikzcd}[cramped,column sep=small,row sep=tiny]
	{(A,\alpha)} && {(B,\beta)} \\
	& {(f,\varphi)} \\
	{(A',\alpha')} && {(B',\beta')}
	\arrow["{(X,a,b,\chi,p)}", "\shortmid"{marking}, from=1-1, to=1-3]
	\arrow["{(X',a',b',\chi',p')}"', "\shortmid"{marking}, from=3-1, to=3-3]
	\arrow["{(f^l,\varphi^l)}"', from=1-1, to=3-1]
	\arrow["{(f^r,\varphi^r)}", from=1-3, to=3-3]
  \end{tikzcd}
  is a pair $(f,\varphi)$ where $f:X\to X'$ is a map of finite sets and $\varphi$ is a transformation of interfaces $f_*\chi\to\chi'$, such that $f_* p = \varphi^* p'$, $\pi_{a'}(\varphi) = \varphi^l$, $\pi_{b'}(\varphi) = \varphi^r$ (with $\varphi^l$ and $\varphi^r$ deterministic), and such that $(f,f^l,f^r)$ constitutes a morphism of cospans, meaning that $a'\circ f^l = f\circ a$ and $b'\circ f^r = f\circ b$.
\item Vertical 1-cells $(f,\varphi):(A,\alpha)\to(A',\alpha')$ and $(f',\varphi'):(A',\varphi')\to(A'',\varphi'')$ compose to yield $(f'\circ f, \varphi'\circ f'_*\varphi):(A,\alpha)\to(A'',\alpha'')$; the vertical composition of 2-cells is similar.
  The vertical identity 1-cell on $(A,\alpha)$ is the pair $(\id_A,\id_\alpha)$ of identity morphisms $\id_A:A\to A$ and $\id_\alpha:\alpha\to\alpha$.
  We denote vertical composition by $\circ$.
\item Finally, horizontal 1-cells (and 2-cells, horizontally) compose by cospan- and copy-composition, as described in \secref{sec:copy-comp}.
  Horizontal identities are given by the external unit $\eta$.
  We denote horizontal composition by $\diamond$. 
\end{enumerate}

All told, this (plus the details in Appendix~\ref{sec:F}) yields the following theorem.

\begin{thm}[Open factor graphs]
  Given a symmetric monoidal copy discard category $(\Ca,I,\otimes)$, the foregoing data define a pseudo double category $\FG$ doubly opfibred over $\CCospan(\FinSet)$.
\end{thm}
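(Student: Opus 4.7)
The plan is to verify the hypotheses of the double Grothendieck construction \parencite[Theorem 3.51]{Cruttwell2022Double}: namely, that the quadruple $F = (F_0, F_1, \mu, \eta)$ assembled in the four preceding subsections constitutes a lax double pseudo functor $\CCospan(\FinSet) \to \SSpan(\Cat)$ in the sense of \defref{def:lax-dbl-func}. Once this is confirmed, the Grothendieck construction immediately yields the pseudo double category $\FG$, together with a double opfibration onto $\CCospan(\FinSet)$; the opfibred (rather than fibred) direction is dictated by the covariance of $F_0$ on $\FinSet$ and the laxness of $F$ with respect to horizontal composition, with the cartesian opliftings being precisely the 2-cells whose underlying interface morphism is an identity.

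First I would confirm pseudofunctoriality of $F_0$ and $F_1$. For $F_0$, the coherence isomorphism $(gf)_* \cong g_* f_*$ is built from the canonical iso $\coprod_{y\in g^{-1}(z)} f^{-1}(y) \cong (gf)^{-1}(z)$ in $\FinSet$ together with symmetry and associativity of $\otimes$ in $\Ca$; coherence follows from symmetric monoidal coherence. Pseudofunctoriality of $F_1$ is spelt out in Appendix~\ref{sec:F1}, where $\int\underline{F_1}m$ and the projections $\pi_a,\pi_b$ are obtained via a limit followed by a Grothendieck construction, so that the coherence isomorphisms for composites of morphisms of cospans are determined universally.

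Next I would verify that $\mu$ and $\eta$ are pseudo natural transformations filling the two squares of \defref{def:lax-dbl-func}. For $\mu$: naturality in a morphism of composable cospans reduces to two facts, namely that the copairing $[\chi,\gamma]_B$ is functorial under comonoid homomorphisms (which is why $F_0$ was restricted to $\Comon(\Ca)$ in the first place) and that the copier $\delta^j_{\chi,\gamma}$ is natural with respect to such homomorphisms. For $\eta$: the mapping $\chi \mapsto (\chi,\epsilon_\chi)$ is natural in $\chi$ precisely because morphisms in $F_0X$ are comonoid homomorphisms, so $\epsilon_\chi = \varphi^*\epsilon_{\chi'}$. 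It remains to check the four coherence axioms: associativity of $\mu$ reduces to coassociativity of the comonoid structures combined with associativity and symmetry of $\otimes$; left and right unitality of $\eta$ reduce to counitality; and the compatibility with the associators of cospan composition reduces to coherence for the pushouts $(X+_B Y)+_C Z \cong X +_B (Y+_C Z)$ in $\FinSet$ together with monoidal coherence in $\Ca$. I expect the associativity axiom for $\mu$ to be the main obstacle: the diagram involves three nested interfaces $\chi,\gamma,\zeta$ with copies performed at two distinct coupling sets, and the bookkeeping of symmetries between the iterated tensors of fibres is delicate. String-diagrammatically, however, the claim is the familiar fact that copy-composition of costates via a comonoid is associative up to symmetry, so no genuinely new content arises.

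Once $F$ is established as an indexed double category, \parencite[Theorem 3.51]{Cruttwell2022Double} produces $\FG$ as a pseudo double category together with its double opfibration to $\CCospan(\FinSet)$, and the structural description given in items (1)–(6) of the excerpt falls out mechanically from the recipe of the double Grothendieck construction applied to $F$.
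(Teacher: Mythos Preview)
Your proposal is correct and follows essentially the same route as the paper: assemble $F=(F_0,F_1,\mu,\eta)$ into a lax double pseudo functor and then invoke the double Grothendieck construction of \textcite{Cruttwell2022Double}. In fact you are slightly more explicit than the paper, which at the end of Appendix~\ref{sec:F-mu} merely asserts the associativity of $\mu$ (via coassociativity of the comonoids, associativity of $\otimes$, and universality of the relevant colimits) and defers a detailed verification to future work; your sketch of why associativity holds matches that assertion.
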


\begin{rmk}
  Since each of the elements of the construction of $\FG$ are functorial, we expect the construction itself to be functorial, so that a functor $\Ca\to\Ca'$ yields a double functor $\FG_{\Ca}\to\FG_{\Ca'}$.
  But we have not established this, and leave it to future work.
\end{rmk}

In other future work, we hope to make use of $\FG$ to study the compositionality of inference on factor graphs, and to study the formal relationships between directed and undirected models.

\subsection{On a graphical calculus}

We motivated the construction of $\FG$ with the graphic Figure~\ref{fig:fg-1}.
Decorated cospan categories as originally formulated inherit a graphical calculus from their status as hypergraph categories or undirected wiring diagram algebras.
Lacking a formal theory of double operads and their graphical calculi, we do not have a similarly formal graphical calculus for $\FG$; still, we expect that its horizontal bicategory (appropriately quotiented) would fit the undirected wiring diagram pattern, at the cost of the transformations of interfaces.
Inspired by this, and in an attempt not to pay that cost, in Appendix~\ref{sec:FG-graph} we (informally) propose a graphical calculus that matches and extends the non-compositional depictions in the statistics literature.

\section{References}
\printbibliography[heading=none]

\appendix

\section{Directed models}

\subsection{Kernels bifibration} \label{sec:krn-proofs}

\begin{proof}[Proof of Proposition \ref{prp:fib-res-comp}]
  By randomization \parencite[Lemma 2.22]{Kallenberg2002Foundations}, there is a measurable function $\rho:\Ibb\to\Ibb\times\Ibb$ such that $\rho_*\unif = \unif\otimes\unif$.
  Suppose $h,k$ have the types $(E,p)\xkto{h}(F,q)\xkto{k}(G,r)$, and suppose $\ovln{h}:E\to M_F$ and $\ovln{k}:F\to M_G$ represent them as indexed random variables, such that $\ovln{h}(x)_*\unif = h(x)$ and $\ovln{k}(y)_*\unif = k(y)$.
  The composite kernel $k\circ h$ may then be represented by the indexed random variable $kh:E\to M_G$ defined by $x\mapsto r\mapsto k\bigl(h(x)(\rho(r)_1)\bigr)(\rho(r)_2)$ where $(-)_1$ and $(-)_2$ represent the first and second projections out of the product; that is, we have $(k\circ h)(x) = kh(x)_*\unif$.

  From Proposition \ref{prop:fib-krn}, we know that if $x\in p[b]$, then $h(x)(s) \in q[b]$ for all $s\in\Ibb$.
  Similarly, if $y\in q[b]$, then $k(y)(s) \in r[b]$ for all $s\in\Ibb$.
  So, setting $y = h(x)(\rho(r)_1)$, we must have $k\bigl(h(x)(\rho(r)_1)\bigr)(s) \in r[b]$ for all $s\in\Ibb$, and in particular for $s = \rho(r)_2$.
  Thus $kh(x)(s) \in r[b]$, for all $s\in\Ibb$, as required.
\end{proof}

\begin{proof}[Proof of Proposition \ref{prp:K-sig-del-adj}]
  We need to exhibit an isomorphism
\[\begin{tikzcd}
	{\mathcal{K}_B(p,\Delta_fq)} && {\mathcal{K}_C(\Sigma_fp,q)}
	\arrow["{(-)^\flat}"{description}, curve={height=-12pt}, from=1-1, to=1-3]
	\arrow["{(-)^\sharp}"{description}, curve={height=-12pt}, from=1-3, to=1-1]
\end{tikzcd}\]
  natural in $p$ and $q$.

  Suppose then that we have $\beta:p\kto\Delta_fq$ in $\Ka_B$ and $\gamma:\Sigma_fp\kto q$ in $\Ka_C$ as in the following diagrams:
\[\begin{tikzcd}[column sep=small]
	E &&&& {q[f]} &&& E &&&& F \\
	&&&&&&&& B \\
	&& B &&&&&&& C
	\arrow["p"', from=1-1, to=3-3]
	\arrow["\beta", squiggly, from=1-1, to=1-5]
	\arrow["{f^*q}", from=1-5, to=3-3]
	\arrow["p"', from=1-8, to=2-9]
	\arrow["f"', from=2-9, to=3-10]
	\arrow["q", from=1-12, to=3-10]
	\arrow["\gamma", squiggly, from=1-8, to=1-12]
\end{tikzcd}\]
  Let $\ovln{\beta}:E\to\Ibb\to q[f]$ represent $\beta$ as an indexed random variable, and let $\pi_F$ be the projection $q[f]\to F$.
  Then we may define $\beta^\flat$ as represented by the map $\ovln{\beta^\flat}:E\to\Ibb\to F$ defined by $\ovln{\beta^\flat}(x)(r) = \pi_F(\beta(x)(r))$.

  Dually, define $\gamma^\sharp:p\kto\Delta_fq$ by the representative $\ovln{\gamma^\sharp}:E\to\Ibb\to q[f]:x\mapsto r\mapsto u_\gamma(x,r)$ where $u_\gamma$ is obtained from the universal property of the pullback in the following diagram (as the dashed arrow)
\[\begin{tikzcd}[sep=scriptsize]
	{E\times\Ibb} \\
	\\
	E && {q[f]} && F \\
	\\
	&& B && C
	\arrow["{\pi_1}"', from=1-1, to=3-1]
	\arrow["{u_\gamma}"{description}, dashed, from=1-1, to=3-3]
	\arrow["{\ovln{\gamma}'}", curve={height=-18pt}, from=1-1, to=3-5]
	\arrow["p"', curve={height=12pt}, from=3-1, to=5-3]
	\arrow["{\pi_F}", from=3-3, to=3-5]
	\arrow["{f^*q}", from=3-3, to=5-3]
	\arrow["\lrcorner"{anchor=center, pos=0.125}, draw=none, from=3-3, to=5-5]
	\arrow["q", from=3-5, to=5-5]
	\arrow["f", from=5-3, to=5-5]
\end{tikzcd}\]
  where $\pi_1$ is the projection out of the product and $\ovln{\gamma}':E\times\Ibb\to F$ is the uncurrying of the representative $\ovln{\gamma}:E\to\Ibb\to F$ of $\gamma$.

  From these definitions, it is clear that $\beta^\flat$ and $\gamma^\sharp$ are well-defined morphisms in $\Ka$, in the sense that the relevant triangles commute, so it remains to check the natural isomorphism.
  That the definitions yield an isomorphism of hom sets is similarly immediate.
  (On the one hand, let $\gamma = \beta^\flat$ in the diagram for $u_\gamma$ above, and observe that $\gamma^\sharp$ must equal $\beta$.
  On the other, let $\beta = \gamma^\sharp$ and observe that by construction $\beta^\flat$ must equal $\gamma$.)

  It therefore only remains to demonstrate naturality.
  Since $(-)^\flat$ and $(-)^\sharp$ are mutually inverse, we need only demonstrate naturality for one; we choose $(-)^\sharp$.
  This means showing that the following square commutes for all kernels $\varphi,\psi$ of appropriate type:
\[\begin{tikzcd}
	{\mathcal{K}_C(\Sigma_fp,q)} && {\mathcal{K}_B(p,\Delta_fq)} \\
	\\
	{\mathcal{K}_C(\Sigma_fo,r)} && {\mathcal{K}_B(o,\Delta_fr)}
	\arrow["{(-)^\sharp_{p,q}}", from=1-1, to=1-3]
	\arrow["{(-)^\sharp_{o,r}}", from=3-1, to=3-3]
	\arrow["{\mathcal{K}_C(\Sigma_f\varphi,\psi)}"', from=1-1, to=3-1]
	\arrow["{\mathcal{K}_B(\varphi,\Delta_f\psi)}", from=1-3, to=3-3]
\end{tikzcd}\]
  Since the hom functors $\Ka_C$ and $\Ka_B$ act by pre- and post-composition, this requires establishing that
  $$ o\xkto{\varphi}p\xkto{\gamma^\sharp}\Delta_f\,q\xkto{\Delta_f\,\psi}\Delta_f\,r
  = \left( \Sigma_f\,o\xkto{\Sigma_f\,\varphi}\Sigma_f\,p\xkto{\gamma}q\xkto{\psi}r \right)^{\sharp} \; . $$

  Let the composite on the right-hand side be denoted by $g$.
  Randomization gives us a function $\varrho:\Ibb\to\Ibb\times\Ibb\times\Ibb$ such that $\varrho_*\unif = \unif\otimes\unif\otimes\unif$.
  Let $\varrho(s) = (s_1,s_2,s_3)$.
  A representative $\ovln{g}$ of $g$ is given by $\ovln{g}(x)(s) = \ovln\psi\bigl(\ovln\gamma(\ovln\varphi(x)(s_1))(s_2)\bigr)(s_3)$ where $\ovln\psi$, $\ovln\gamma$, and $\ovln\varphi$ are representatives of $\psi$, $\gamma$, and $\varphi$ respectively.
  $g^\sharp$ is then represented by the currying of the dashed map $u_g$ in the following diagram:
\[\begin{tikzcd}[cramped,sep=scriptsize]
	{E\times\Ibb} \\
	\\
	E && {r[f]} && G \\
	\\
	&& B && C
	\arrow["{\pi_1}"', from=1-1, to=3-1]
	\arrow["{u_g}"{description}, dashed, from=1-1, to=3-3]
	\arrow["{\ovln{g}'}", curve={height=-18pt}, from=1-1, to=3-5]
	\arrow["p"', curve={height=12pt}, from=3-1, to=5-3]
	\arrow["{\pi_G}", from=3-3, to=3-5]
	\arrow["{f^*r}", from=3-3, to=5-3]
	\arrow["\lrcorner"{anchor=center, pos=0.125}, draw=none, from=3-3, to=5-5]
	\arrow["r", from=3-5, to=5-5]
	\arrow["f", from=5-3, to=5-5]
\end{tikzcd}\]

  Let the composite on the left-hand side (above) be denoted by $h$.
  A representative $\ovln{h}$ of $h$ is given by
  \begin{align*}
    \ovln{h}(x)(s)
    &= \ovln{\psi[f]}\bigl(\ovln{\gamma^\sharp}(\ovln\phi(x)(s_1))(s_2)\bigr)(s_3) \\
    &= \ovln{\psi[f]}\bigl(u_\gamma(\ovln\phi(x)(s_1), s_2)\bigr)(s_3) \\
    &= \ovln\psi\bigl(\pi_F(u_\gamma(\ovln\phi(x)(s_1), s_2))\bigr)(s_3)
  \end{align*}
  so we need to verify that $\pi_F(u_\gamma(\ovln\phi(x)(s_1), s_2)) = \ovln\gamma(\ovln\varphi(x)(s_1))(s_2)$.
  We have $\pi_F(u_\gamma(y,t)) = \ovln\gamma(y)(t)$ by construction, so $\ovln{h}(x)(s) = \ovln{g^\sharp}(x)(s)$.
  This establishes that $\Sigma_f \dashv \Delta_f$.
\end{proof}

\begin{proof}[Proof of Proposition \ref{prp:K-b-c}]
  The functor $\Sigma_\rho\,\Delta_\pi : \Ka_E\to\Ka_F$ sends $X\xto{\alpha}E$ to $\pi^*X\xto{\Delta_\pi\,\alpha}P\xto{\rho}F$; fibrewise, by adjointness, we have $\Sigma_\rho\,\Delta_\pi(\alpha)[i] \cong \alpha[\pi\circ\rho^*i]$ and, on morphisms $f$, we have $\Sigma_\rho\,\Delta_\pi(f)[i] = f[\pi\circ\rho^*i]$.
  On the other side, the functor $\Delta_q\,\Sigma_p$ sends $\alpha$ to $q^*X\xto{\Delta_q\,\alpha}q^*E\xto{\Delta_q\,p}F$; fibrewise, we have $\Delta_q\,\Sigma_p(\alpha)[i] \cong \alpha[p^*(q\circ i)]$ on objects and $\Delta_q\,\Sigma_p(f)[i] = f[p^*(q\circ i)]$ on morphisms.
  It therefore suffices to show that $p^*(q\circ i) = \pi\circ\rho^*i$ in $\QBS$.
  This follows from the universal property of the pullback, which implies that the pasting of two pullback squares is again a pullback square:
\[\begin{tikzcd}
	\bullet & P & E \\
	I & F & B
	\arrow["i"', from=2-1, to=2-2]
	\arrow["q"', from=2-2, to=2-3]
	\arrow["p", from=1-3, to=2-3]
	\arrow["\pi", from=1-2, to=1-3]
	\arrow["\rho"', from=1-2, to=2-2]
	\arrow[from=1-1, to=2-1]
	\arrow["{\rho^*i}", from=1-1, to=1-2]
	\arrow["{p^*(q\circ i)}", curve={height=-18pt}, from=1-1, to=1-3]
	\arrow["\lrcorner"{anchor=center, pos=0.125}, draw=none, from=1-1, to=2-2]
	\arrow["\lrcorner"{anchor=center, pos=0.125}, draw=none, from=1-2, to=2-3]
\end{tikzcd}\]
  The outer and inner squares are thus both pullbacks of $p$ along $q\circ i$, and hence we must have $p^*(q\circ i) = \pi\circ\rho^*i$.
  This yields an isomorphism $\Sigma_\rho\,\Delta_\pi(\alpha) \cong \Delta_q\,\Sigma_p(\alpha)$ rather than an equality because universal constructions are only unique up to natural isomorphism; and thus the naturality of this isomorphism follows similarly from universality.
\end{proof}

\subsection{Pull-push sections} \label{apdx:pull-push}

\begin{proof}[Proof of Theorem \ref{thm:S}]
  Much of the structure is simplified, owing to the trivial vertical decorations and the decoration of the apices by sets.
  Thus, for example, each component $\mu_{m,n}$ or $\eta_E$ of the pseudo natural transformations $\mu$ or $\eta$ is only a function (discrete functor).
  In the main text, the functoriality of $S_0$ and $S_1$ is justified.
  So here we will only justify the (pseudo) naturality of $\mu$ and $\eta$ and their (pseudo) associativity and unitality.

  Suppose then that $f:m\to m'$ and $g:n\to n'$ are horizontally composable morphisms of spans (\textit{i.e.}, such that $g_l=f_r$) from $m=\bigl(A\xleftarrow{a}E\xto{b}B\bigr)$ and $n=\bigl(B\xleftarrow{\bar{b}}F\xto{c}C\bigr)$ to $m'=\bigl(A'\xleftarrow{a'}E'\xto{b'}B'\bigr)$ and $n'=\bigl(B'\xleftarrow{\bar{b}'}F\xto{c'}C'\bigr)$.
  To validate the pseudo naturality of $\mu$ is to confirm the commutativity of the square
  \[\begin{tikzcd}[cramped,sep=small]
	{S_1(m')\times S_1(n')} && {S_1(n'\diamond m')} \\
	\\
	{S_1(m)\times S_1(n)} && {S_1(n\diamond m)}
	\arrow["{\mu_{m',n'}}", from=1-1, to=1-3]
	\arrow["{S_1(f)\times S_1(g)}"', from=1-1, to=3-1]
	\arrow["{S_1(g\diamond f)}", from=1-3, to=3-3]
	\arrow["{\mu_{m,n}}"', from=3-1, to=3-3]
  \end{tikzcd} \; .\]
  up to natural isomorphism.
  The top-right composite evaluates to
  \begin{align*}
    S_1(g\diamond f)\bigl(\Sigma_{a'}\,\Delta_{b'}(\tau')\circ\sigma'\bigr)
    &= \Delta_{f_l}\bigl(\Sigma_{a'}\,\Delta_{b'}(\tau')\circ\sigma'\bigr) & \text{(by definition)} \\
    &= \Delta_{f_l}\,\Sigma_{a'}\,\Delta_{b'}(\tau') \circ \Delta_{f_l}(\sigma') & \text{(by functoriality)}
  \end{align*}
  while the left-bottom composite evaluates to
  \begin{align*}
    \Sigma_a\,\Delta_b\bigl(S_1(g)(\tau')\bigr)\circ S_1(f)(\sigma') &= \Sigma_a\,\Delta_b\,\Delta_{g_l}(\tau')\circ \Delta_{f_l}(\sigma') & \text{(by definition}) \\
    &= \Sigma_a\,\Delta_f\,\Delta_{b'}(\tau')\circ \Delta_{f_l}(\sigma') & \text{(since $g_l\circ b = b'\circ f$)} \\
    &\cong \Delta_{f_l}\,\Sigma_{a'}\,\Delta_{b'}(\tau')\circ \Delta_{f_l}(\sigma') & \text{(by Beck-Chevalley)}
  \end{align*}
  and so $S_1(g\diamond f)\circ\mu_{m',n'} \cong \mu_{m,n}\circ\bigl(S_1(f)\times S_1(g)\bigr)$ as required.

  Validating the pseudo naturality of $\eta$ means, for any $f:E\to F$ in $\Ba$, validating that $\eta_F \cong S_1(f)\circ \eta_E$, \textit{i.e.}, $\eta_F \cong \Delta_f\,\eta_E$.
  Since $\eta_E$ is the identity on $\iota E$ and $\Delta_f$ is a functor, it must preserve identities.
  So in fact $\eta$ is strictly natural (just as pull-push copy-composition is strictly unital, as we confirm below).
  
  To validate pseudo associativity, suppose $m$ and $n$ are spans as before and $o=\bigl(C\xleftarrow{\bar{c}}G\xto{d}D\bigr)$.
  We need to show that $\mu_{n\diamond m,o}\bigl(\mu_{m,n}(\sigma,\tau),\rho\bigr) \cong \mu_{m,o\diamond n}\bigl(\sigma,\mu_{n,o}(\tau,\rho)\bigr)$ for all appropriately typed sections $\sigma,\tau,\rho$.
  This obtains as follows:
  \begin{align*}
    \mu_{n\diamond m,o}\bigl(\mu_{m,n}(\sigma,\tau),\rho\bigr)
    &= \mu_{n\diamond m,o}\bigl(\Sigma_a\,\Delta_b(\tau)\circ\sigma,\rho\bigr) & \text{(by definition)} \\
    &= \Sigma_a\,\Sigma_{\pi_E}\,\Delta_{\pi_F}\,\Delta_c(\rho) \circ \Sigma_a\,\Delta_b(\tau)\circ\sigma & \text{(by definition)} \\
    &\cong \Sigma_a\,\Delta_b\,\Sigma_{\bar{b}}\,\Delta_c(\rho) \circ \Sigma_a\,\Delta_b(\tau) \circ \sigma & \text{(by Beck-Chevalley)} \\
    &= \Sigma_a\,\Delta_b\bigl(\Sigma_{\bar{b}}\,\Delta_c(\rho) \circ \tau\bigr) \circ \sigma & \text{(by functoriality)} \\
    &= \mu_{m,o\diamond n}\bigl(\sigma, \Sigma_{\bar{b}}\,\Delta_c(\rho)\circ\tau\bigr) & \text{(by definition)} \\
    &= \mu_{m,o\diamond n}\bigl(\sigma,\mu_{n,o}(\tau,\rho)\bigr) & \text{(by definition)}
  \end{align*}
  where $\pi_E$ and $\pi_F$ are the projections out of the pullback of $\bar{b}$ along $b$.

  Finally, strict unitality demands that $\mu_{\id_B,n}(\eta_B,\tau) = \tau$ and $\mu_{m,\id_B}(\sigma,\eta_B) = \sigma$.
  We have
  $$ \mu_{\id_B,n}(\eta_B,\tau) = \Sigma_{\id_B}\,\Delta_{\id_B}(\tau)\circ\eta_B = \tau\circ\id_{\iota_B} = \tau $$
  and
  $$ \mu_{m,\id_B}(\sigma,\eta_B) = \Sigma_a\,\Delta_b(\eta_B)\circ\sigma = \Sigma_a\,\Delta_b(\id_{\iota_B})\circ\sigma = \Sigma_a(\id_{\iota_E})\circ\sigma = \id_{\Sigma_a\,\iota_E}\circ\sigma = \sigma $$
  as required.
\end{proof}

\section{Factor graphs} \label{sec:F}

\subsection{Constructing the horizontal decorations} \label{sec:F1}

The apex decoration has a peculiar form, with one category of decorating data (the interfaces) further decorated by a family of others (the factors).
This is because the apex decorations are themselves obtained by a Grothendieck construction, defined functorially for each finite set.
The first step in this construction is to define the interface categories.
Since we have the cospans determining which parts of the interfaces are exposed for factor composition, there may now be some parts which remain `latent', and transformations of these parts are not constrained to be comonoid homomorphisms.
That is, we can allow the transformations of the latent parts of the interfaces to be arbitrary morphisms in $\Ca$, as long as the exposed parts remain constrained as before\footnote{
It is this part of the construction that means we cannot use Fong's original decorated cospans: here, the decoration on the apex necessarily depends on the legs, in order that we have comonoid homomorphisms in the right places.}.

This line of thought formalizes as follows.
Given a cospan $m:\bigl(A\xrightarrow{a} X\xleftarrow{b} B\bigr)$, we first define an enlarged version of $F_0X$, not totally restricted to comonoid homomorphisms. 
We denote this enlargement by $\tilde{F_0}m$ and construct it as the following limit of categories, so that we do still have comonoid homomorphisms over the legs:
\[\begin{tikzcd}
	{\mathbf{Cat}\bigl(\mathsf{disc}\,A,\mathbf{Comon}(\mathcal{C})\bigr)} && {\tilde{F_0}m} && {\mathbf{Cat}\bigl(\mathsf{disc}\,B,\mathbf{Comon}(\mathcal{C})\bigr)} \\
	\\
	{\mathbf{Cat}(\mathsf{disc}\,A,\mathcal{C})} && {\mathbf{Cat}(\mathsf{disc}\,X,\mathcal{C})} && {\mathbf{Cat}(\mathsf{disc}\,B,\mathcal{C})}
	\arrow["{\mathbf{Cat}(\mathsf{disc}\,a,\mathcal{C})}", from=3-3, to=3-1]
	\arrow["{\mathbf{Cat}(\mathsf{disc}\,b,\mathcal{C})}"', from=3-3, to=3-5]
	\arrow[hook, from=1-1, to=3-1]
	\arrow[hook, from=1-5, to=3-5]
	\arrow[dashed, from=1-3, to=1-1]
	\arrow[dashed, from=1-3, to=3-3]
	\arrow[dashed, from=1-3, to=1-5]
\end{tikzcd}\]
Note that, because the left and right vertical functors in this diagram are embeddings, so is the induced functor $\tilde{F_0}m\to\Cat(\disc X,\Ca)$.
We can therefore understand $\tilde{F_0}m$ as the wide subcategory of $\Cat(\disc X,\Ca)$ whose morphisms are those $X$-indexed families of morphisms of $\Ca$ whose $A$- and $B$-components are comonoid homomorphisms.

The next step is to decorate $\tilde{F_0}m$ with factors.
Since $\Ca$ is a category, factors on an interface $\chi:\disc X\to\Ca$ form a set $\Ca(\chi^\otimes,I)$.
We thus define a presheaf $\underline{F_1}m:\tilde{F_0}m\op\to\Set$ as follows.
On objects (interfaces) $\chi$, we define $\underline{F_1}m(\chi)$ to be $\Ca(\chi^\otimes,I)$.
On morphisms of interfaces $\varphi:\chi\to\chi'$, we set $\underline{F_1}m(\varphi):\underline{F_1}m(\chi')\to\underline{F_1}m(\chi)$ to be the precomposition functor $\Ca(\varphi^\otimes,I):\Ca({\chi'}^\otimes,I)\to\Ca(\chi^\otimes,I)$ and denote it by $\varphi^*$.
This yields a well-defined presheaf by the functoriality of $\otimes$ and of precomposition.
The apex decoration returned by $F_1$ on $m$ is then given by the 1-categorical Grothendieck construction of $\underline{F_1}m$ (its category of elements), denoted $\int\underline{F_1}m$, whose objects and morphisms are as described in the main text.

This tells us how to decorate the apices of cospans --- but, given the cospan $m$, $F_1$ needs to return a whole span of categories $F_0A \xleftarrow{\pi_a} \int\underline{F_1}m \xrightarrow{\pi_b} F_0B$.
We obtain the legs of this span through the universal properties of the construction of $\int\underline{F_1}m$.
First, because $\int\underline{F_1}m$ is obtained by a Grothendieck construction, it is equipped with a projection functor $\pi_m : \int\underline{F_1}m \to \tilde{F_0}m$ which acts by `forgetting' the factors; \textit{i.e.} $\pi_m(\chi,f) = \chi$.
Next, the construction of $\tilde{F_0}m$ as a limit means that it is equipped with canonical projection functors to $F_0A$ and $F_0B$:
\[\begin{tikzcd}
	{F_0A=\mathbf{Cat}\bigl(\mathsf{disc}\,A,\mathbf{Comon}(\mathcal{C})\bigr)} & {\tilde{F_0}m} & {\mathbf{Cat}\bigl(\mathsf{disc}\,B,\mathbf{Comon}(\mathcal{C})\bigr)=F_0B}
	\arrow[dashed, from=1-2, to=1-3]
	\arrow[dashed, from=1-2, to=1-1]
\end{tikzcd}\]
Therefore, we obtain the requisite span of categories
$F_1m := \bigl(F_0A \xleftarrow{\pi_a} \int\underline{F_1}m \xrightarrow{\pi_b} F_0B\bigr)$
as the composition of this span of limit projections with $\pi_m$:
\[\begin{tikzcd}
	&& {\displaystyle\int\underline{F_1}m} \\
	\\
	{F_0A} && {\tilde{F_0}m} && {F_0B}
	\arrow[dashed, from=3-3, to=3-5]
	\arrow[dashed, from=3-3, to=3-1]
	\arrow["{\pi_m}", from=1-3, to=3-3]
	\arrow["{\pi_a}"', curve={height=18pt}, from=1-3, to=3-1]
	\arrow["{\pi_b}", curve={height=-18pt}, from=1-3, to=3-5]
\end{tikzcd}\]

We now need to define the action of $F_1$ on morphisms of cospans.
Suppose that $m' := \bigl(A'\xleftarrow{a'}X'\xrightarrow{B'}\bigr)$ is another cospan.
A morphism $\phi:m\to m'$ is a triple $(\phi_l,\phi,\phi_r)$ of maps of finite sets making the following diagram commute:
\[\begin{tikzcd}
	A && X && B \\
	\\
	{A'} && {X'} && {B'}
	\arrow["a", from=1-1, to=1-3]
	\arrow["{a'}"', from=3-1, to=3-3]
	\arrow["{\phi_l}"', from=1-1, to=3-1]
	\arrow["\phi", from=1-3, to=3-3]
	\arrow["b"', from=1-5, to=1-3]
	\arrow["{\phi_r}", from=1-5, to=3-5]
	\arrow["{b'}", from=3-5, to=3-3]
\end{tikzcd}\]
We know from the treatment above that $F_0$ is functorial, and this yields the action of $F_1$ on the left and right components $\phi_l$ and $\phi_r$.
Moreover, the construction of $\int\underline{F_1}m$ is functorial: first, the construction of $\tilde{F_0}m$ is functorial in $m$, by the functoriality of taking limits.
Likewise, the Grothendieck construction is functorial, yielding a functor $\int\underline{F_1} : \FinSet^{\{\bullet\to\bullet\leftarrow\bullet\}} \to \Cat$.
The following diagram then commutes by construction:
\[\begin{tikzcd}
	{F_0A} && {\displaystyle\int\underline{F_1}m} && {F_0B} \\
	\\
	{F_0A'} && {\displaystyle\int\underline{F_1}m'} && {F_0B'}
	\arrow["{\pi_a}"', from=1-3, to=1-1]
	\arrow["{\pi_{a'}}", from=3-3, to=3-1]
	\arrow["{F_0\phi_l}"', from=1-1, to=3-1]
	\arrow["{\int\underline{F_1}\phi}", from=1-3, to=3-3]
	\arrow["{\pi_b}", from=1-3, to=1-5]
	\arrow["{F_0\phi_r}", from=1-5, to=3-5]
	\arrow["{\pi_{b'}}"', from=3-3, to=3-5]
\end{tikzcd}\]
Consequently, we define the action of $F_1$ on the morphism of cospans $\phi$ to return the morphism of spans $F_1\phi$ whose components are $(F_0\phi_l,\int\underline{F_1}\phi,F_0\phi_r)$, and we see that this action is functorial by construction.

\subsection{The horizontal composition} \label{sec:F-mu}

$\mu$ must be a pseudo natural transformation filling the following diagram:
\[\begin{tikzcd}
	{\mathbf{FinSet}^{\{\bullet\to\bullet\leftarrow\bullet\}}\times_{\mathbf{FinSet}}\mathbf{FinSet}^{\{\bullet\to\bullet\leftarrow\bullet\}}} && {\mathbf{FinSet}^{\{\bullet\to\bullet\leftarrow\bullet\}}} \\
	\\
	{\mathbf{Cat}^{\{\bullet\leftarrow\bullet\to\bullet\}}\times_{\mathbf{Cat}}\mathbf{Cat}^{\{\bullet\leftarrow\bullet\to\bullet\}}} && {\mathbf{Cat}^{\{\bullet\leftarrow\bullet\to\bullet\}}}
	\arrow["{\diamond_{\Bbb{C}\mathbf{ospan}(\mathbf{FinSet})}}", from=1-1, to=1-3]
	\arrow["{F_1\times F_1}"', from=1-1, to=3-1]
	\arrow["{\diamond_{\Bbb{S}\mathbf{pan}(\mathbf{Cat})}}"', from=3-1, to=3-3]
	\arrow["{F_1}", from=1-3, to=3-3]
	\arrow["\mu", shorten <=35pt, shorten >=35pt, Rightarrow, from=3-1, to=1-3]
\end{tikzcd}\]
The external composition $\diamond_{\CCospan(\FinSet)}$ acts on composable cospans by pushout, taking $m:\bigl(A\xrightarrow{a}X\xleftarrow{b}B\bigr)$ and $n:\bigl(B\xrightarrow{b'}Y\xleftarrow{c}C\bigr)$ to the outer cospan in the pushout diagram
\[\begin{tikzcd}
	&& {X+_BY} \\
	& X && Y \\
	A && B && C
	\arrow["a", from=3-1, to=2-2]
	\arrow["b"', from=3-3, to=2-2]
	\arrow["{b'}", from=3-3, to=2-4]
	\arrow["c"', from=3-5, to=2-4]
	\arrow["{\iota_X}", from=2-2, to=1-3]
	\arrow["{\iota_Y}"', from=2-4, to=1-3]
\end{tikzcd} \; .\]
(Psuedo functoriality of $\diamond_{\CCospan(\FinSet)}$ follows from the universal properties of colimits.)
The composite pseudo functor along the top and right of the above diagram therefore acts on $m$ and $n$ to yield the span of categories
$$F_0A \xleftarrow{\pi_{\iota_X\circ a}} \int\underline{F_1}(n\diamond m) \xrightarrow{\pi_{\iota_Y\circ c}} F_0C \; .$$
Dually, $\diamond_{\SSpan(\Cat)}$ acts on composable spans by pullback, so that the composite pseudo functor along the left and bottom of the diagram acts to yield the outer span in the following diagram:
\[\begin{tikzcd}
	&& {\displaystyle\int\underline{F_1}m\times_{F_0B}\int\underline{F_1}n} \\
	& {\displaystyle\int\underline{F_1}m} && {\displaystyle\int\underline{F_1}n} \\
	{F_0A} && {F_0B} && {F_0C}
	\arrow["{\pi_m}"', from=1-3, to=2-2]
	\arrow["{\pi_n}", from=1-3, to=2-4]
	\arrow["{\pi_b}", from=2-2, to=3-3]
	\arrow["{\pi_{b'}}"', from=2-4, to=3-3]
	\arrow["\lrcorner"{anchor=center, pos=0.125, rotate=-45}, draw=none, from=1-3, to=3-3]
	\arrow["{\pi_a}"', from=2-2, to=3-1]
	\arrow["{\pi_c}", from=2-4, to=3-5]
\end{tikzcd}\]
Note that, here and throughout this section, $\pi_m$ and $\pi_n$ are the projections out of the pullback, rather than the Grothendieck fibrations of the previous section.

In both cases, the feet of the span are $F_0A$ and $F_0C$ on the left and right respectively; this must be the case for $F$ to be a well-defined double functor.
Consequently, the component of $\mu$ at $(m,n)$, denoted $\mu_{m,n}$, must be a functor $\int\underline{F_1}m\times_{F_0B}\int\underline{F_1}n\to\int\underline{F_1}(n\diamond m)$ making this diagram commute:
\[\begin{tikzcd}
	&& {\displaystyle\int\underline{F_1}m\times_{F_0B}\int\underline{F_1}n} \\
	& {\displaystyle\int\underline{F_1}m} && {\displaystyle\int\underline{F_1}n} \\
	{F_0A} &&&& {F_0C} \\
	&& {\displaystyle\int\underline{F_1}(n\diamond m)}
	\arrow["{\pi_n}", from=1-3, to=2-4]
	\arrow["{\pi_c}", from=2-4, to=3-5]
	\arrow["{\pi_m}"', from=1-3, to=2-2]
	\arrow["{\pi_a}"', from=2-2, to=3-1]
	\arrow["{\mu_{m,n}}", from=1-3, to=4-3]
	\arrow["{\pi_{\iota_X\circ a}}", curve={height=-12pt}, from=4-3, to=3-1]
	\arrow["{\pi_{\iota_Y\circ c}}"', curve={height=12pt}, from=4-3, to=3-5]
\end{tikzcd}\]
This means that $\mu_{m,n}$ takes a pair of factors over $X$ and $Y$, that are composable by agreeing over $B$, and returns a composite factor over $X+_BY$, all while preserving the exposed interfaces over $A$ and $C$.
We define $\mu_{m,n}$ to act by copy-composition, as follows.

First, notice that both $\int\underline{F_1}(n\diamond m)$ and $\int\underline{F_1}m\times_{F_0B}\int\underline{F_1}n$ are (discrete) fibrations; the former over $\tilde{F_0}(n\diamond m)$ by definition and the latter over $\tilde{F_0}m\times_{F_0B}\tilde{F_0}n$, by the universal property of the pullback:
\[\begin{tikzcd}
	& {\displaystyle\int\underline{F_1}m\times_{F_0B}\int\underline{F_1}n} \\
	{\displaystyle\int\underline{F_1}m} && {\displaystyle\int\underline{F_1}n} \\
	& {\tilde{F_0}m\times_{F_0B}\tilde{F_0}n} \\
	{\tilde{F_0}m} && {\tilde{F_0}n} \\
	& {F_0B}
	\arrow[from=3-2, to=4-1]
	\arrow[from=3-2, to=4-3]
	\arrow[from=4-1, to=5-2]
	\arrow[from=4-3, to=5-2]
	\arrow["{\pi_m}"', from=2-1, to=4-1]
	\arrow["{\pi_n}", from=2-3, to=4-3]
	\arrow[from=1-2, to=2-1]
	\arrow[from=1-2, to=2-3]
	\arrow[dashed, from=1-2, to=3-2]
	\arrow["\lrcorner"{anchor=center, pos=0.125, rotate=-45}, draw=none, from=3-2, to=5-2]
\end{tikzcd}\]
$\mu_{m,n}$ should therefore correspond to a morphism of fibrations.
By the Grothendieck construction \parencite[p.7]{Moeller2018Monoidal}, such a morphism is equivalent to a pair $(\mu^0_{m,n},\mu^1_{m,n})$ of a functor and a natural transformation as in the following diagram:
\[\begin{tikzcd}
	{\bigl(\tilde{F_0}m\times_{F_0B}\tilde{F_0}n\bigr)^{\,\mathrm{op}}} && {\tilde{F_0}(n\diamond m)^{\,\mathrm{op}}} \\
	\\
	& {\mathbf{Set}}
	\arrow["{\mu^{0\;\mathrm{op}}_{m,n}}", from=1-1, to=1-3]
	\arrow[""{name=0, anchor=center, inner sep=0}, "{\bigl(\underline{F_1}m, \underline{F_1}n\bigr)}"', from=1-1, to=3-2]
	\arrow[""{name=1, anchor=center, inner sep=0}, "{\underline{F_1}(n\diamond m)}", from=1-3, to=3-2]
	\arrow["{\mu^1_{m,n}}", shorten <=11pt, shorten >=11pt, Rightarrow, from=0, to=1]
\end{tikzcd}\]
To define the functor $\mu^0_{m,n}: \tilde{F_0}m \times_{F_0B} \tilde{F_0}n \to \tilde{F_0}(n\diamond m)$, note that the objects in its domain are pairs $(\chi,\gamma)$ of functors $\chi:\disc X\to\Ca$ and $\gamma:\disc Y\to\Ca$ that agree on $B$.
By the universal properties of the coproduct $X+Y$ and the pushout $X+_BY$, there must be copairings $[\chi,\gamma]:\disc(X+Y)\to\Ca$ and $[\chi,\gamma]_B:\disc(X+_BY)\to\Ca$ making the following diagram commute; in particular, $[\chi,\gamma]$ must factor through $[\chi,\gamma]_B$:
\[\begin{tikzcd}
	&& {\mathsf{disc}\,B} \\
	\\
	{\mathsf{disc}\,X} && {\mathsf{disc}\,(X+Y)} && {\mathsf{disc}\,Y} \\
	\\
	&& {\mathsf{disc}\,(X+_BY)} \\
	\\
	&& {\mathcal{C}}
	\arrow["\chi"', curve={height=18pt}, from=3-1, to=7-3]
	\arrow["\gamma", curve={height=-18pt}, from=3-5, to=7-3]
	\arrow["{\mathsf{disc}\,b}"', curve={height=12pt}, from=1-3, to=3-1]
	\arrow["{\mathsf{disc}\,b'}", curve={height=-12pt}, from=1-3, to=3-5]
	\arrow["{\iota^X_B}"{description}, from=3-1, to=5-3]
	\arrow["{\iota^Y_B}"{description}, from=3-5, to=5-3]
	\arrow["{[\chi,\gamma]_B}"{description}, dashed, from=5-3, to=7-3]
	\arrow["{\iota^X}", from=3-1, to=3-3]
	\arrow["{\iota^Y}"', from=3-5, to=3-3]
	\arrow["{[\iota^X_B,\iota^Y_B]}"{description}, dashed, from=3-3, to=5-3]
\end{tikzcd}\]
Since the image of $(\chi,\gamma)$ under $\mu^0_{m,n}$ must be a functor $\disc(X+_BY)\to\Ca$, we make the universal definition $\mu^0_{m,n}(\chi,\gamma) := [\chi,\gamma]_B$.
Functoriality follows from the functoriality of colimits.

The corresponding component of $\mu^1_{m,n}$ must then be a function
$$ \mu^1_{m,n}(\chi,\gamma) : \Ca(\chi^\otimes,I) \times \Ca(\gamma^\otimes,I) \to \Ca([\chi,\gamma]_B^\otimes,I) \; .$$
Note that $[\chi,\gamma]^\otimes \cong \chi^\otimes \otimes \gamma^\otimes$.
We can thus factor $\mu^1_{m,n}$ as
$$ \Ca(\chi^\otimes,I) \times \Ca(\gamma^\otimes,I) \xrightarrow{\otimes} \Ca(\chi^\otimes\otimes\gamma^\otimes,I) \xrightarrow{\sim} \Ca([\chi,\gamma]^\otimes,I) \to \Ca([\chi,\gamma]_B^\otimes,I)$$
so that, by Yoneda, we seek a morphism $\delta_{\chi,\gamma}:[\chi,\gamma]_B^\otimes \to [\chi,\gamma]^\otimes$ in $\Ca$.

It is here that copying finally enters.
For each $j:X+_BY$, copying induces a morphism
$$\delta_{\chi,\gamma}^j:[\chi,\gamma]_B(j) \to \bigotimes_{i:[\iota^X_B,\iota^Y_B]^{-1}(j)}\, [\chi,\gamma](i)$$
in $\Ca$ which is unique up to coassociativity.
Note that the codomain of this morphism is, by the pullback constraint, the monoidal product of $[\iota^X_B,\iota^Y_B]^{-1}(j)$-many copies of the same object $[\chi,\gamma]_B(j)$ so that this copying is well-defined; and when $j$ is not in the image of $B$, then this product is only unary, so that in this case, $\delta^j_{\chi,\gamma}$ is the corresonding identity morphism.
Then
$$\bigotimes_{j:X+_BY} \delta_{\chi,\gamma}^j : \bigotimes_{j:X+_BY} [\chi,\gamma]_B(j) \to \bigotimes_{j:X+_BY} \bigotimes_{i:[\iota^X_B,\iota^Y_B]^{-1}(j)} [\chi,\gamma](i)$$
has the requisite type, since $\otimes_{j:X+_BY}\, [\chi,\gamma]_B(j) = [\chi,\gamma]_B^\otimes$ and
$$\bigotimes_{j:X+_BY} \bigotimes_{i:[\iota^X_B,\iota^Y_B]^{-1}(j)} [\chi,\gamma](i) = \bigotimes_{i:X+Y} [\chi,\gamma](i) = [\chi,\gamma]^\otimes$$
both by definition.
Hence we define $\delta_{\chi,\gamma} := \otimes_{j:X+_BY}\;\delta_{\chi,\gamma}^j$.

We need to verify that $\delta_{\chi,\gamma}$ is natural in $\chi$ and $\gamma$.
Since identity morphisms are always natural, we only need to check the case that $[\iota^X_B,\iota^Y_B]^{-1}(j)$ has more than one element --- and this in turn only obtains for $j$ in the image of $B$.
By construction, morphisms of interfaces on those components are necessarily comonoid homomorphisms, and these are the morphisms for which $\delta^j_{\chi,\gamma}$ is natural.
Therefore, $\delta_{\chi,\gamma}$ is always natural in $\chi$ and $\gamma$, and since every factor of $\mu^1_{m,n}$ is accordingly natural, so is $\mu^1_{m,n}$ itself, as required.

This means that $(\mu^0_{m,n},\mu^1_{m,n})$ constitutes a well-defined morphism of indexed sets, thereby inducing a well-defined morphism $\mu_{m,n}$ of discrete fibrations.
Thus, given factors $(\chi,f)$ and $(\gamma,g)$ that agree over $B$, the external composition $\mu_{m,n}$ acts to return $\bigl([\chi,\gamma]_B,(f\otimes g)\circ\delta_{\chi,\gamma}\bigr)$.

We also need to define the unit $\eta$, a pseudo natural transformation as in the following diagram:
\[\begin{tikzcd}
	{\mathbf{FinSet}} && {\mathbf{FinSet}^{\{\bullet\rightarrow\bullet\leftarrow\bullet\}}} \\
	\\
	{\mathbf{Cat}} && {\mathbf{Cat}^{\{\bullet\leftarrow\bullet\rightarrow\bullet\}}}
	\arrow["{\mathsf{id}_{\mathbb{C}\mathbf{ospan}(\mathbf{FinSet})}}", from=1-1, to=1-3]
	\arrow["{\mathsf{id}_{\mathbb{S}\mathbf{pan}(\mathbf{Cat})}}", from=3-1, to=3-3]
	\arrow["{F_0}"', from=1-1, to=3-1]
	\arrow["{F_1}", from=1-3, to=3-3]
	\arrow["\eta", shorten <=15pt, shorten >=15pt, Rightarrow, from=3-1, to=1-3]
\end{tikzcd}\]
The component $\eta_X$ at each finite set $X$ is determined by a functor $\eta_X : F_0X \to \int\underline{F_1}(\id_X)$ as in the diagram
\[\begin{tikzcd}[cramped,sep=small]
	{F_0X} && {F_0X} && {F_0X} \\
	\\
	{F_0X} && {\displaystyle\int\underline{F_1}(\mathsf{id}_X)} && {F_0X}
	\arrow["{\pi_X}"', from=3-3, to=3-1]
	\arrow["{\pi_X}", from=3-3, to=3-5]
	\arrow[Rightarrow, no head, from=1-1, to=3-1]
	\arrow[Rightarrow, no head, from=1-5, to=3-5]
	\arrow[Rightarrow, no head, from=1-3, to=1-1]
	\arrow[Rightarrow, no head, from=1-3, to=1-5]
	\arrow["{\eta_X}"{description}, from=1-3, to=3-3]
\end{tikzcd}\]
where $\id_X$ denotes the identity cospan $X=X=X$ on $X$.
Note that because $\int\underline{F_1}(\mathsf{id}_X)$ is defined over the identity span, $\tilde{F_0}(\id_X) \cong F_0X$, and so $\pi_X$ simply forgets the factor decorations.
Moreover, this means that all components of morphisms of interfaces in $\tilde{F_0}(\id_X)$ are comonoid homomorphisms in $\Ca$.

We define $\eta_X$ using the counits of the comonoid structures on the interfaces on $X$.
That is to say, $\eta_X$ maps an interface $\chi:\disc X \to \Comon(\Ca)$ to the pair $(\chi, \epsilon_\chi)$, where here $\epsilon_\chi$ denotes the canonical discarding map $\chi^\otimes \to I$.
The functoriality of $\eta_X$ follows from the fact that all morphisms in $F_0X$ are comonoid homomorphisms, so if $\varphi$ is a morphism $\chi \to \chi'$ then $\epsilon_\chi = \varphi^*\epsilon_{\chi'}$.
Pseudo naturality of $\eta$ obtains likewise, and its unitality with respect to $\mu$ follows from the counitality of the comonoid counits.

At this final point, we should also validate the remaining laws that a lax double functor should satisfy: in particular, the associativity of $\mu$.
However, we shall at this point simply assert that associativity follows from the associativity of $\otimes$, the coassociativity of the relevant comonoid structures, and the universal properties of the (co)limits involved in the constructions, leaving a detailed examination of this claim to future exposition.

\subsection{Graphical calculus} \label{sec:FG-graph}

The relationship between decorated cospans and undirected wiring diagram algebras indicates that factor graphs should be understood as composing `operadically' (multicategorically): that is to say, by nesting; thus, within each factor of a factor graph may be hiding a whole other factor graph.
This formalizes the situation sometimes encountered in the literature --- \textit{e.g.} \textcite[Fig.20]{Koudahl2023Realising} --- in which a collection of factors is grouped into a composite factor, sometimes depicted by drawing an extra box around the collection.

To hew more closely to existing undirected wiring diagrams, as well as the standard string diagrams for monoidal categories, we will adopt a syntax in which factors are depicted with circular boxes, exposed variables by emanating wires, and transformation morphisms by rectangular boxes.
We will depict comonoid homomorphisms as `diamond' shaped boxes, both to indicate that they commute with horizontal composition $\diamond$ and also to suggest that they can pierce through the bubble enclosing a factor; this means we will attach wires to their corners.
Conversely, we will depict arbitrary morphisms with standard rectangular boxes, attaching wires to their sides.

\begin{figure}[H]
  \[\scalebox{0.8}{\tikzfig{img/fg-eg-1}}\]
  \caption{A composite factor $f'$.}
  \label{fig:fg-eg-1}
\end{figure}

Figure~\ref{fig:fg-eg-1} shows a simple example of a composite factor $f'$, constituted by another factor $f$ with three exposed variables, two of which are composed with transformations, one deterministic ($\varphi$) and one not ($\psi$).
Thinking statistically, we can interpret exposed variables (dangling wires) as representing \textit{observed random variables}.
We can draw a bubble around the whole of Figure~\ref{fig:fg-eg-1} to obtain a new factor with three unobserved variables; nonetheless these variables are observ\textit{able}, which we depict by terminating them with a black dot.
As a horizontal 1-cell in $\FG$, the resulting factor has the type $f_0:0\nrightarrow 0$, where $0$ denotes the empty set.
\[\scalebox{0.8}{\tikzfig{img/fg-eg-2}}\]
The drawing of the bubble represents a 2-cell $f_0\Rightarrow f'$.
We can think of this 2-cell, being directed from $f_0$ to $f'$, as "reaching into $f_0$", pulling observable wires out to expose them as the wires of $f'$.

We can compose $f$ with some factor $g$ (with a compatible interface), repurposing the black dot --- or \textit{spider}, in the terminology of \textcite{Coecke2016Categorical} --- to represent the locus of composition:
\[\scalebox{0.8}{\tikzfig{img/fg-eg-3}}\]
(In this way, we can think of a one-legged spider, representing an observable variable, as a locus of \textit{potential} composition.)

We can reach inside $g\diamond f'$ to expose the internal wire:
\[\scalebox{0.8}{\tikzfig{img/fg-eg-4}}\]
And this gives us another way to understand the deterministic morphisms: as those that can slide past spiders, multiplying on the way into factors.
For example, suppose have another composite factor $h'$ involving $\varphi$, such as this:
\[\scalebox{0.8}{\tikzfig{img/fg-eg-5}}\]
Then we could compose along $\varphi$ with $f'$ to obtain $h'\diamond f'$:
\[\scalebox{0.8}{\tikzfig{img/fg-eg-6}}\]
Notice that here we have made use of the ability (but not necessity!) of copy-composition to couple multiple variables together.

If again we expose the composed variable, we can pull $\varphi$ outside, too:
\[\scalebox{0.8}{\tikzfig{img/fg-eg-7}}\]
Notice that, by using spiders, there would be no ambiguity if we omitted the outer bubble: observed variables are those that are not terminated in a one-legged spider; and deterministic transformations ``push forward'' from exposed variables to multiply through spiders in the direction of their target factors.

We can use non-deterministic morphisms to render observable variables \textit{unobservable}, by `marginalizing' them out.
For instance, here we have marginalized out one of the observed variables of $h'$ by transforming it with a state $\nu$:
\[\scalebox{0.8}{\tikzfig{img/fg-eg-8}}\]
Here, we have used the standard string-diagrammatic representation of a state in $\Ca$.

Finally, let us depict the factor graph of Figure~\ref{fig:fg-1} in this language:
\[\scalebox{0.8}{\tikzfig{img/fg-eg-9}}\]
Note how similar this depiction is to the non-compositional form of Figure~\ref{fig:fg-1}: the major change is the replacement of `variable' nodes with (labelled) two-legged spiders, making the distinction between observed and observable variables.
Beyond this distinction, the syntax of $\FG$ tells us precisely what kinds of compositions are allowed --- including supplying notions of \textit{transformation} of factors.

\end{document}